 \tikzstyle{vecArrow} = [thick, decoration={markings,mark=at position
\newtheorem{theorem}{Theorem}[section]
\newtheorem{lemma}[theorem]{Lemma}
\newtheorem{corollary}[theorem]{Corollary}
\newtheorem{remark}[theorem]{Remark}
\theoremstyle{definition}
\newtheorem*{assumption}{Assumption}
\numberwithin{equation}{section}
\newcommand{\bc}{\begin{center}}
\newcommand{\ec}{\end{center}}
\newcommand{\be}{\begin{eqnarray}}
\newcommand{\ee}{\end{eqnarray}}
\newcommand{\ben}{\begin{eqnarray*}}
\newcommand{\een}{\end{eqnarray*}}
\newcommand{\Om}{\Omega}
\newcommand{\eps}{\varepsilon}
\newcommand{\om}{\omega}
\newcommand{\pa}{\partial}
\newcommand{\na}{\nabla}
\def\x{\times}
\def\na{\nabla}
\def\dx{\,dx}
\def\ds{\,ds}
\def\cF{\mathcal{E}}
\def\cF{\mathcal{F}}
\def\cT{\mathcal{T}}
\def\R{\mathbb{R}}
\newcommand{\cM}{\ensuremath{\mathcal{M}} }
\newcommand{\NN}{\ensuremath{\mathcal{N}} }
\def\div{\operatorname{div}}
\DeclareMathOperator{\ddiv}{div}
\DeclareMathOperator{\Curl}{curl}
\DeclareMathOperator{\RM}{RM}
\newcommand{\step}[1]{\par\medskip\noindent{\em Step #1.}}
\newcommand{\case}[1]{\par\medskip\noindent{\em Case #1.}}
\newcommand{\ennorm}[1]{\lvert\!\lvert\!\lvert #1 \rvert\!\rvert\!\rvert}
\newcommand{\tri}{\mathcal{T}}
\newcommand{\SCone}{S^{\mathrm{C},1}_D}
\newcommand{\SCtwo}{S^{\mathrm{C},2}_D}
\newcommand{\Snc}{S^{\mathrm{NC},1}_D}
\title[Low-order nonconforming FEMs in 3D]
{Two low-order nonconforming finite element methods for the Stokes flow in 3D}
\author[J. Hu and M. Schedensack]{Jun Hu and Mira Schedensack}
\address{Jun Hu, LMAM and School of Mathematical Sciences,
 Peking University, Beijing 100871, P. R. China}
 \email{hujun@math.pku.edu.cn}
 \address{Mira Schedensack, Institut f\"ur Numerische und Angewandte Mathematik, 
 Universit\"at M\"unster, Einsteinstr. 62, D-48149 M\"unster, Germany}
 \email{mira.schedensack@uni-muenster.de}
\thanks{The first author was supported by  the NSFC Project 11625101. 
The second author 
       gratefully acknowledges support by the DFG
       Priority Program 1748 
       under the project ``Robust and Efficient Finite Element Discretizations 
for Higher-Order Gradient Formulations''
(SCHE1885/1-1).
Parts of this article were written 
while the second author enjoyed the kind hospitality of 
the Hausdorff Institute for Mathematics (Bonn, Germany). 
Both authors would like to thank the supports of the fifth and sixth 
Chinese-German workshops on Applied and Computational Mathematics held at the 
University of Augsburg, Germany, from September 21st to 25th, 2015, and the 
Tongji  University, China, from October 9th to 13rd, 2017.}
\date{\today}
\subjclass[2000]{65N10, 65N15, 35J25}
\keywords{nonconforming, finite element method, Korn's inequality, LBB condition,
divergence free \\ AMS Subject Classification: 65N10,  65N15, 35J25}
\begin{document}

\begin{abstract}
In this paper, we propose two low order nonconforming finite element meth\-ods (FEMs)
for the three-dimensional Stokes flow that generalize the non-con\-for\-ming 
FEM of Kouhia and Stenberg (1995, Comput. Methods Appl. Mech. Engrg.). 
The finite element spaces proposed in this paper consist of two globally 
continuous components (one piecewise affine and one enriched component)
and one component that is continuous at the midpoints of interior faces.
We prove that the discrete Korn inequality and a discrete inf-sup condition
hold uniformly in the meshsize and also for a non-empty Neumann boundary.
Based on these two results, we show the  well-posedness
of the discrete problem.
Two counterexamples prove that there is no direct generalization of 
the Kouhia-Stenberg FEM to three space dimensions: The finite element space 
with one non-conforming and two conforming piecewise affine components does not satisfy 
a discrete inf-sup condition with piecewise constant pressure approximations,
while finite element functions with
two non-conforming and one conforming component do not satisfy a discrete 
Korn inequality.
\end{abstract}

\maketitle

\section{Introduction}

Given a polygonal, bounded Lipschitz domain $\Omega\subseteq\R^3$ with 
closed Dirichlet boundary $\Gamma_D$ and Neumann boundary 
$\Gamma_N=\partial \Omega\setminus\Gamma_D$ both with positive two-dimensional measure
and some right-hand side $g\in[L^2(\Omega)]^3$, the three dimensional 
Stokes problem seeks the velocity $u\in[H^1(\Omega)]^3$ and the pressure 
$p\in L^2(\Omega)$ with 
\begin{align} \label{e:StokesStrong}
  \left. 
  \begin{aligned}
    -2\mu\ddiv\eps(u) + \nabla p &= g\\
    \div u &=0
  \end{aligned}
  \right\}\; \text{in }\Omega,
  \quad
  u\vert_{\Gamma_D}=0,
  \quad
  (2\mu\varepsilon(u)-p I_{3\times 3})\vert_{\Gamma_N} \nu = 0.
\end{align}
Here and throughout this paper, $\mu$ is the viscosity.  The symmetric gradient of
a vector field reads $\eps(v):=1/2(\na v+\na v^T)$  for any $v\in [H^1(\Om)]^3$,
while $\nu$ denotes the outer unit normal.

Finite element methods (FEMs) for 
the two  dimensional Stokes problem have
been extensively studied in the literature, most of stable schemes are summarised
in the  book \cite{BoffiBrezziFortin2013}.  However only little
attention has been paid to the three dimensional problem. 
Here, we only mention the 
works~\cite{Stenberg1987,Boffi1997,Zhang2005,GuzmanNeilan2014,NeilanSap2016} 
for the three
dimensional Taylor-Hood elements. FEMs with discontinuous ansatz functions
for the pressure, and therefore, an improved mass-conservation are introduced 
in \cite{BernardiRaugel1985,BoffiCavalliniGardiniGastaldi2012a,BoffiCavalliniGardiniGastaldi2012b}.
If $\Gamma_N=\emptyset$, the Stokes equations can be reformulated in 
terms of the full gradient of $u$. In this case,
the non-conforming FEM of Crouzeix and Raviart \cite{CrouzeixRaviart1973} 
yields a stable approximation. Otherwise, it is not stable 
due to a missing Korn inequality
in two as well as in three dimensions. In 2D, the non-conforming FEM of Kouhia and 
Stenberg~\cite{KouhiaStenberg1995} circumvents this by choosing 
only one component non-conforming and the other one conforming. 
This non-conforming FEM is the lowest-order FEM for the Stokes problem 
with piecewise constant pressure approximation in 2D.
A generalization to higher polynomial degrees of that FEM can be found 
in \cite{Schedensack:2016}.

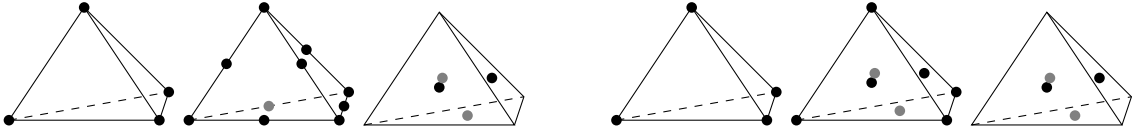
\begin{figure}
 \begin{center}
  \begin{tikzpicture}
    \draw (0,0)--(2,0)--(1,1.5)--cycle;
    \draw (2,0)--(2.125,0.375)--(1,1.5);
    \draw[dashed] (0,0)--(2.125,0.375);
    \fill (0,0) circle (2pt);
    \fill (2,0) circle (2pt);
    \fill (1,1.5) circle (2pt);
    \fill (2.125,0.375) circle (2pt);
  \end{tikzpicture}
  \begin{tikzpicture}
    \draw (0,0)--(2,0)--(1,1.5)--cycle;
    \draw (2,0)--(2.125,0.375)--(1,1.5);
    \draw[dashed] (0,0)--(2.125,0.375);
    \fill (0,0) circle (2pt);
    \fill (2,0) circle (2pt);
    \fill (1,1.5) circle (2pt);
    \fill (2.125,0.375) circle (2pt);
    \fill (1,0) circle (2pt);
    \fill (0.5,0.75) circle (2pt);
    \fill (1.5,0.75) circle (2pt);
    \fill (2.0625,0.1875) circle (2pt);
    \fill (1.5625,0.9375) circle (2pt);
    \fill[gray] (1.0625,0.1875) circle (2pt);
  \end{tikzpicture}
  \begin{tikzpicture}
    \draw (0,0)--(2,0)--(1,1.5)--cycle;
    \draw (2,0)--(2.125,0.375)--(1,1.5);
    \draw[dashed] (0,0)--(2.125,0.375);
    \fill (1,0.5) circle (2pt);
    \fill (1.7,0.625) circle (2pt);
    \fill[gray] (1.0417,0.625) circle (2pt);
    \fill[gray] (1.375,0.125) circle (2pt);
  \end{tikzpicture}
  \hspace{0.9cm}
  \begin{tikzpicture}
    \draw (0,0)--(2,0)--(1,1.5)--cycle;
    \draw (2,0)--(2.125,0.375)--(1,1.5);
    \draw[dashed] (0,0)--(2.125,0.375);
    \fill (0,0) circle (2pt);
    \fill (2,0) circle (2pt);
    \fill (1,1.5) circle (2pt);
    \fill (2.125,0.375) circle (2pt);
  \end{tikzpicture}
  \begin{tikzpicture}
    \draw (0,0)--(2,0)--(1,1.5)--cycle;
    \draw (2,0)--(2.125,0.375)--(1,1.5);
    \draw[dashed] (0,0)--(2.125,0.375);
    \fill (0,0) circle (2pt);
    \fill (2,0) circle (2pt);
    \fill (1,1.5) circle (2pt);
    \fill (2.125,0.375) circle (2pt);
    \fill (1,0.5) circle (2pt);
    \fill (1.7,0.625) circle (2pt);
    \fill[gray] (1.0417,0.625) circle (2pt);
    \fill[gray] (1.375,0.125) circle (2pt);
  \end{tikzpicture}
  \begin{tikzpicture}
    \draw (0,0)--(2,0)--(1,1.5)--cycle;
    \draw (2,0)--(2.125,0.375)--(1,1.5);
    \draw[dashed] (0,0)--(2.125,0.375);
    \fill (1,0.5) circle (2pt);
    \fill (1.7,0.625) circle (2pt);
    \fill[gray] (1.0417,0.625) circle (2pt);
    \fill[gray] (1.375,0.125) circle (2pt);
  \end{tikzpicture}
 \end{center}
 \caption{\label{f:dofs}Degrees of freedom of the velocity approximation 
  for the two new stable finite elements. The pressure is approximated 
  with piecewise constants.}
\end{figure}

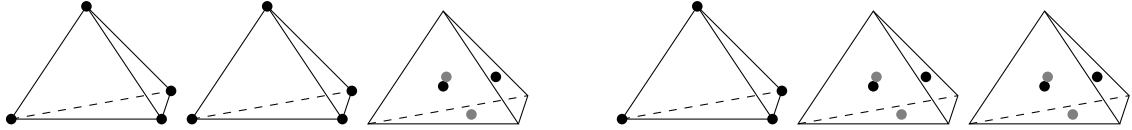
\begin{figure}
 \begin{center}
  \begin{tikzpicture}
    \draw (0,0)--(2,0)--(1,1.5)--cycle;
    \draw (2,0)--(2.125,0.375)--(1,1.5);
    \draw[dashed] (0,0)--(2.125,0.375);
    \fill (0,0) circle (2pt);
    \fill (2,0) circle (2pt);
    \fill (1,1.5) circle (2pt);
    \fill (2.125,0.375) circle (2pt);
  \end{tikzpicture}
  \begin{tikzpicture}
    \draw (0,0)--(2,0)--(1,1.5)--cycle;
    \draw (2,0)--(2.125,0.375)--(1,1.5);
    \draw[dashed] (0,0)--(2.125,0.375);
    \fill (0,0) circle (2pt);
    \fill (2,0) circle (2pt);
    \fill (1,1.5) circle (2pt);
    \fill (2.125,0.375) circle (2pt);
  \end{tikzpicture}
  \begin{tikzpicture}
    \draw (0,0)--(2,0)--(1,1.5)--cycle;
    \draw (2,0)--(2.125,0.375)--(1,1.5);
    \draw[dashed] (0,0)--(2.125,0.375);
    \fill (1,0.5) circle (2pt);
    \fill (1.7,0.625) circle (2pt);
    \fill[gray] (1.0417,0.625) circle (2pt);
    \fill[gray] (1.375,0.125) circle (2pt);
  \end{tikzpicture}
  \hspace{0.9cm}
  \begin{tikzpicture}
    \draw (0,0)--(2,0)--(1,1.5)--cycle;
    \draw (2,0)--(2.125,0.375)--(1,1.5);
    \draw[dashed] (0,0)--(2.125,0.375);
    \fill (0,0) circle (2pt);
    \fill (2,0) circle (2pt);
    \fill (1,1.5) circle (2pt);
    \fill (2.125,0.375) circle (2pt);
  \end{tikzpicture}
  \begin{tikzpicture}
    \draw (0,0)--(2,0)--(1,1.5)--cycle;
    \draw (2,0)--(2.125,0.375)--(1,1.5);
    \draw[dashed] (0,0)--(2.125,0.375);
    \fill (1,0.5) circle (2pt);
    \fill (1.7,0.625) circle (2pt);
    \fill[gray] (1.0417,0.625) circle (2pt);
    \fill[gray] (1.375,0.125) circle (2pt);
  \end{tikzpicture}
  \begin{tikzpicture}
    \draw (0,0)--(2,0)--(1,1.5)--cycle;
    \draw (2,0)--(2.125,0.375)--(1,1.5);
    \draw[dashed] (0,0)--(2.125,0.375);
    \fill (1,0.5) circle (2pt);
    \fill (1.7,0.625) circle (2pt);
    \fill[gray] (1.0417,0.625) circle (2pt);
    \fill[gray] (1.375,0.125) circle (2pt);
  \end{tikzpicture}
 \end{center}
 \caption{\label{f:KSnotstable}Degrees of freedom of the velocity for the 
  direct generalization
  of the 2D FEM of Kouhia and Stenberg \cite{KouhiaStenberg1995}. Those two 
  FEMs are not stable, as shown in Section~\ref{s:counterexamples}.}
\end{figure}

One key result of this paper consists in
two counterexamples in Section~\ref{s:counterexamples} below which 
imply that a generalization to three dimensions with 
two conforming and one non-conforming component is not inf-sup stable, 
while a generalization with one conforming and two non-conforming 
components does not satisfy a discrete Korn inequality, 
see Figure~\ref{f:KSnotstable} for a visualization of the degrees 
of freedom for those two FEMs.
To ensure both a discrete inf-sup stability and a discrete Korn 
inequality, we employ a discrete space 
consisting of one piecewise affine and globally continuous
and one non-conforming piecewise affine component. 
The third component can be approximated in the space of piecewise quadratic
and globally continuous functions as well as in the space of piecewise affine 
and globally continuous functions enriched with face bubble functions,
see Figure~\ref{f:dofs} for an illustration of the degrees of freedom.
The discrete inf-sup condition and the discrete Korn inequality imply 
the well posedness of the method. 
Furthermore, the recently established medius analysis technique 
\cite{Gudi2010,BadiaCodinaGudiGuzman2014,HuMaShi2014,CarstensenKoehlerPeterseimSchedensack2015,
CarstensenSchedensack2014}
together with the a~posteriori techniques of \cite{Carstensen05, CarstensenHu2007} 
proves a best-approximation result for the non-conforming FEM;
see Theorem~\ref{t:bestapprox} below.

The rest of the paper is organised as follows. In Section~\ref{s:FEM}, we present the
finite element method for \eqref{e:StokesStrong}.  The well-posedness of the discrete
problem will be proved in Section~\ref{s:stab}. 
Two counterexamples are given in Section~\ref{s:counterexamples}
that prove that discretizations with piecewise affine approximations 
for the velocity and piecewise constant approximations for the pressure 
are not stable.
Section~\ref{s:numerics} concludes the paper with numerical experiments.

Throughout this paper, standard notation on Lebesgue and Sobolev spaces is 
employed and $(\bullet,\bullet)_{L^2(\Omega)}$ denotes the $L^2$ scalar product over 
$\Omega$. Let $\|\bullet\|_{0,\omega}$ denote the $L^2$ norm over a set 
$\omega\subseteq\Omega$ (possibly two-dimensional) and $\|\bullet\|_0$
abbreviates $\|\bullet\|_{0,\Omega}$.  
The space $H_D^1(\Omega)$ consists of all 
$H^1$ functions that vanish on $\Gamma_D$ in the sense of traces.
Let $A\lesssim B$ abbreviate that there exists some mesh-size independent 
generic constant $0\leq C<\infty$ such that $A\leq C B$ and let $A\approx B$
abbreviate $A\lesssim B\lesssim A$.

%------------------------------
\section{Finite element method}\label{s:FEM}

Suppose that the closure $\overline{\Omega}$ is covered
exactly by a regular and shape-regular triangulation $\cT$ of $\overline{\Omega}$ into
closed tetrahedra in $3D$ in the sense of Ciarlet \cite{BrennerScott08}, that is 
two distinct tetrahedra are either disjoint or share exactly one 
vertex, edge or face.
Let $\cF$ denote the set of all faces in $\cT$
with $\cF(\Om)$  the set of interior faces, $\cF(\Gamma_D)$ the set of faces on 
$\Gamma_D$, and $\cF(\Gamma_N)$ the set of faces on $\Gamma_N$.
Let $\mathcal{N}$ be the set of all vertices   with $\NN(\Om)$ the set of interior vertices, 
$\NN(\Gamma_D)$ the set of vertices  on $\Gamma_D$, and $\NN(\Gamma_N)$ the set 
of vertices on $\Gamma_N$.  The set of faces of the element $T$ is denoted  by $\cF(T)$.
By $h_T$ we denote the diameter of the element $T\in \cT$ and by $h_\tri$ the 
piecewise constant mesh-size function with $h_\tri\vert_T=h_T$ for all $T\in\tri$. 
We denote
by $\omega_T$ the union of (at most five) tetrahedra $T'\in \cT$  that  share a face
with $T$,  and by $\omega_F$ the union of (at most two) tetrahedra having in common the
face $F$.  Given any face $F\in\cF(\Om)$ with diameter  $h_F$ we assign one
fixed unit normal $\nu_F:=(\nu_1,\, \nu_2,\nu_3)$.
For $F$ on the boundary we choose $\nu_F=\nu$ the unit outward normal to
$\Omega$. Once $\nu_F$ has  been fixed on $F$, in relation to $\nu_F$ one
defines the elements $T_{-}\in \cT$ and $T_{+}\in \cT$,
with $F=T_{+}\cap T_{-}$ and $\om_F=T_{+}\cup T_{-}$, such that $\nu_F$ is 
the outward normal of $T_+$.
Given $F\in\cF(\Omega)$ and some $\R^d$-valued function $v$ defined
in $\Omega$, with $d=1\,,2\,,3$, we denote by
$[v]:=(v|_{T_+})|_F-(v|_{T_-})|_F$ the jump of $v$ across $F$ which will 
become the trace on boundary faces.

Let $P_0(T)$ denote the space of constant functions on $T$, 
$P_1(T)$ the space of affine functions and 
$P_2(T)$ the space of quadratic functions and 
let $\SCone$ and $\SCtwo$ denote the piecewise affine and piecewise 
quadratic conforming finite element spaces over $\cT$ which read
\begin{align*}
  \SCone&:=\left\{v\in H^1_D(\Omega)\left|\;\forall T\in\cT,\; v|_T\in P_1(T)
    \right\}\right.\,,\\
  \SCtwo&:=\left\{v\in H^1_D(\Omega)\left|\;\forall T\in\cT,\; v|_T\in P_2(T)
    \right\}\right.\, .
\end{align*}
The nonconforming  linear finite
element space $\Snc$  is defined as
\begin{equation*}
  \Snc:=\left\{v\in L^2(\Omega)\left|
    \begin{array}{l}
      \forall T\in\cT,\; v|_T\in P_1(T),\;
      \forall F\in \cF(\Om),\; \int_{F}[v]_F\ds=0,\\
      \text{and }\forall F\in\cF\text{ with }F\subseteq\Gamma_D,\;
      \int_F v\ds=0
    \end{array}
    \right\}\right. \,.
\end{equation*}
Define also the space of face bubbles by 
\begin{align*}
 \mathcal{B}_\mathcal{F}
   :=\operatorname{span}\{\varphi_F\vert 
       F\in\cF(\Omega)\text{ or }F\in\cF\text{ with }F\subseteq\Gamma_N\}
\end{align*}
with the face bubbles $\varphi_F\in H^1_D(\Omega)$ defined by 
\begin{align*}
 \varphi_F:=60 \lambda_a\lambda_b\lambda_c
  \quad\text{ for }\quad F=\operatorname{conv}\{a,b,c\}
\end{align*}
and with barycentric coordinates $\lambda_a,\lambda_b,\lambda_c$.
We consider two finite element spaces for the velocity. 
The first one is the space which contains second order polynomials in 
the second component and it is defined by
$$
V_{2,D}:=\SCone\x \SCtwo\x \Snc\,.
$$
As a second finite element space for the velocity we consider the 
enrichment of the second component by face bubbles, i.e.,
$$ 
 V_{\cF,D}:=\SCone\x 
    (\SCone+\mathcal{B}_\cF)\x \Snc\,.
$$
Since $V_{2, D}$ and $V_{\cF,D}$ are nonconforming spaces, 
the differential operators $\na$, $\eps$
and $\div$ are defined elementwise, written as, $\na_h$, $\eps_h$ and $\div_h$,
respectively. We equip the space $V_{2, D}$ and $V_{\cF,D}$ with the broken norm
$$
\|v\|_{1,h}^2:=\|v\|_{0}^2+\|\na_h v\|_{0}^2
\quad \text{ for all } v\in V_{2, D}\oplus V_{\cF,D}\,.
$$
For both choices of finite element spaces for the velocity,
the pressure will be sought in the space 
\begin{align*}
 Q_h:=\{q\in L^2(\Omega)\vert \forall T\in\tri,\; q\vert_T\in P_0(T)\}
\end{align*}
consisting of piecewise constant functions.
Let $V_{h,D}$ be $V_{2,D}$ or $V_{\cF,D}$.
The finite element method then reads: Find $u_h\in V_{h, D}$ and $p_h\in Q_h$
with

\begin{equation}\label{StokesDisc}
\begin{aligned}
a_h(u_h, v_h)+b_h(v_h,p_h)& =(g,v_h)_{L^2(\Omega)}
 &&\quad \text{ for all } v_h\in V_{h, D}\,,\\
b_h(u_h,q_h)  &=0
 &&\quad  \text{ for all } q_h\in Q_h\,,
\end{aligned}
\end{equation}
where the two discrete bilinear forms read
\begin{align*}
a_h(u_h,v_h)&:=2\mu\int_\Omega\eps_h(u_h):\eps_h(v_h)\,dx,\\
b_h(v_h,p_h)&:=-\int_\Omega\ p_h \div_h v_h\,dx.
\end{align*}

The next section proves a discrete inf-sup condition and a discrete Korn
inequality. Those two ingredients then imply the existence of a unique 
solution from Corollary~\ref{c:uniqexdisc} and the best-approximation 
error estimate of Theorem~\ref{t:bestapprox} below. This leads to the 
convergence against the solution of the (weak form of)~\eqref{e:StokesStrong},
namely the solution $(u,p)\in [H^1_D(\Omega)]^3 \times L^2(\Omega)$ with 
\begin{equation}\label{e:StokesWeak}
\begin{aligned}
 2\mu(\eps(u),\eps(v))_{L^2(\Omega)} -(p,\div v)_{L^2(\Omega)} 
    &= (g,v)_{L^2(\Omega)}
  &&\quad\text{ for all }v\in [H^1_D(\Omega)]^3,\\
 (q,\div u)_{L^2(\Omega)} &= 0 
 &&\quad  \text{ for all } q\in L^2(\Omega)\, .
\end{aligned}
\end{equation}

\section{The stability  analysis}\label{s:stab}

In this section, we  prove the well-posedness
of the discrete problem and a best-approximation result, 
which follow from the discrete Korn inequality and
the inf-sup condition from Theorems~\ref{t:korn} and \ref{t:infsup} below.

%%%%%%%%%%%%%%%%%%%%%%%%%%%%%%%%%%%%%%
% Korn 
%%%%%%%%%%%%%%%%%%%%%%%%%%%%%%%%%%%%%%

The discrete Korn inequality relies on the following assumption.
\begin{assumption}[(H1)]
Any face $F\in\cF$ that lies on the Dirichlet boundary, $F\subseteq \Gamma_D$,
and that is horizontal in the sense that $\lvert\nu_F(3)\rvert=1$,
satisfies one of the following conditions:
\begin{enumerate}
 \item[(a)] There exists a vertex $z\in \NN$ and a face $F'\in\cF\setminus\{F\}$ such that 
       $z\in F\cap F'$, $F'\subseteq\Gamma_D$ and $F'$ is not horizontal in the 
       sense that $\lvert \nu_{F'}(3)\rvert < 1$.
 \item[(b)] There exist a vertex $z\in\NN$ and two faces $F',F''\in\cF$, $F'\neq F''$,
       such that $F'\subseteq\Gamma_D$, $F''\subseteq\Gamma_D$ and  
       $\{z\}=F\cap F'\cap F''$.
\end{enumerate}
\end{assumption}

Note that in condition (b) all of the faces $F$, $F'$ and $F''$ might be 
horizontal.

\begin{remark}
 The Assumption~(H1) basically excludes that there are horizontal 
 faces on the Dirichlet boundary which are surrounded by the 
 Neumann boundary. If Assumption~(H1) is not satisfied, the triangulation 
 can be refined with, e.g., a bisection algorithm such that vertices that 
 satisfy condition (b) are created. Note that assumption~(H1) is conserved 
 by a red, green or bisection refinement.
 
 The assumption~(H1) excludes the situation depicted in 
 Figure~\ref{f:counterexampleH1}, where an infinitesimal rigid body motion is not 
 excluded by the Dirichlet boundary condition, due to the non-conformity 
 in the ansatz space.
\end{remark}

\begin{remark}
A permutation of the conforming, non-conforming, and enriched finite element 
space in the definition of $V_{2,D}$ and $V_{\mathcal{F},D}$ is possible as 
well. The condition on horizontal faces in assumption (H1) has then be 
replaced by the corresponding condition on vertical faces with 
$\lvert\nu_F(1)\rvert=1$ or $\lvert\nu_F(2)\rvert=1$, corresponding to
the chosen non-conforming component. This might be beneficial in some 
situations.
\end{remark}

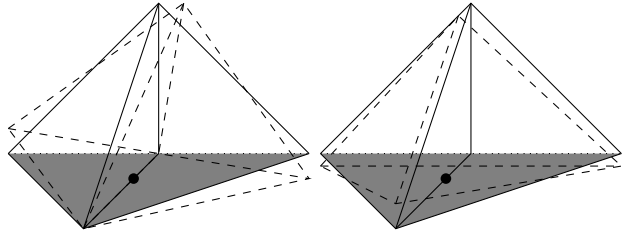
\begin{figure}
 \begin{tikzpicture}
 \fill[gray] (-2,0)--(-1,-1)--(2,0);
  \draw[dotted] (-2,0)--(2,0);
  \draw (-2,0)--(-1,-1)--(2,0);
  \draw (-2,0)--(0,2)--(2,0);
  \draw (0,2)--(-1,-1);
  \draw (0,2)--(0,0)--(-1,-1);
  \fill (-0.333,-0.333) circle (2pt);
  % rotated 1
  \draw[dashed] (-2,0.333)--(2,-0.333)--(-1,-1)--cycle;
  \draw[dashed] (-2,0.333)--(0.333,2)--(2,-0.333);
  \draw[dashed] (0.333,2)--(-1,-1);
  \draw[dashed] (0.333,2)--(0,0)--(-1,-1);
 \end{tikzpicture}
 \begin{tikzpicture}
 \fill[gray] (-2,0)--(-1,-1)--(2,0);
  \draw[dotted] (-2,0)--(2,0);
  \draw (-2,0)--(-1,-1)--(2,0);
  \draw (-2,0)--(0,2)--(2,0);
  \draw (0,2)--(-1,-1);
  \draw (0,2)--(0,0)--(-1,-1);
  \fill (-0.333,-0.333) circle (2pt);
  %rotated 2
  \draw[dashed] (-2,-0.1665)--(2,-0.1665)--(-1,-0.667)--cycle;  
  \draw[dashed] (-2,-0.1665)--(-0.1665,1.8335)--(2,-0.1665);
  \draw[dashed] (-0.1665,1.8335)--(-1,-0.667);
 \end{tikzpicture}
 \caption{\label{f:counterexampleH1}Two infinitesimal rigid body motions 
 that satisfy the Dirichlet boundary conditions at the gray face in 
 the non-conforming sense.}
\end{figure}

We furthermore assume the following assumption (H2).
\begin{assumption}[(H2)]
 There exists no interior face $F\in\cF(\Omega)$, whose three vertices 
 lie on the boundary $\partial\Omega$.
 Furthermore, the triangulation $\tri$ consists of more than one simplex.
\end{assumption}

\begin{remark}
Similar assumptions as~(H1) and~(H2) are necessary for the two dimensional 
situation. The assumption (H1) is hidden in \cite{KouhiaStenberg1995} 
in the assumption that the mesh-size 
has to be small enough. 
See also~\cite{CarstensenSchedensack2014} for a discussion about necessary 
 conditions on the triangulation.
\end{remark}

We define the set
\begin{align*}
  \NN(H1):=\left\{z\in\NN(\Gamma_D)\left\vert 
     \begin{array}{l}
       \exists F\in\cF \text{ such that } 
       F\subseteq\Gamma_D, \lvert\nu_F(3)\rvert =1 \\
       \text{and }z\text{ satisfies condition (a) or (b) for that }F
     \end{array}
     \right\}\right..
\end{align*}
Furthermore, given any vertex $z\in\NN(\Omega)$, we define 
\begin{align*}
  \cF_z:=\left\{F\in\cF\left\vert 
      \begin{array}{l}
      z\in F\text{ or }
      F\subseteq \Gamma_D\text{ with }\lvert\nu_F(3)\rvert < 1 \\
      \text{and }\exists T\in\tri \text{ with }
      z\in T\text{ and }F\subseteq T
      \end{array}
      \right\}\right.
\end{align*}
and for $z\in \NN(H1)$ we let 
\begin{align*}
   \cF_z:=\{F\in\cF\mid z\in F\}.
\end{align*}
denote the set of faces that share $z$.

\begin{remark}\label{r:overlap}
The assumptions (H1) and (H2) guarantee that any face $F\in \cF(\Omega)\cup\cF(\Gamma_D)$
is contained in a set $\cF_z$ for some node $z\in\NN(\Om)\cup\NN(H1)$.
\end{remark}

To establish the discrete Korn inequality,  we need the following key result.
Let $\omega_z$ denote the patch of $z$, i.e., 
\begin{align*}
 \omega_z:=\operatorname{int}\big(\bigcup \{T\in\tri\vert z\in T\}\big).
\end{align*}

\begin{lemma}\label{Lemmaefficiency}
Let (H1) and (H2) be satisfied.
Let $V_{h,D}$ be the finite element space $V_{2,D}$ or $V_{\cF,D}$ and
let $v_h\in V_{h,D}$.
For any vertex 
$z\in \NN(\Om)\cup \NN(H1)$, it holds that
\begin{equation}\label{efficiency}
\sum\limits_{F\in \cF_z}h_F^{-1}\|[v_h]_F\|_{0, F}^2
\leq C \inf\limits_{v\in[ H^1_D(\om_z)]^3}\|\eps_h(v-v_h)\|_{0, \om_z}^2\,,
\end{equation}
where
$$
H^1_D(\om_z):=\{v\in H^1(\om_z)\mid v=0 \text{ on } \pa\om_z\cap \Gamma_D\}\,.
$$
The constant $C$ may depend on the 
angles of the simplices and on the configuration of the simplices 
in $\omega_z$, but it is independent of the mesh-size.
\end{lemma}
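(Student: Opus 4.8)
The plan is to reduce the estimate \eqref{efficiency} to a finite-dimensional statement on a reference patch via a scaling and compactness argument. First I would observe that the left-hand side is a seminorm on $V_{h,D}|_{\omega_z}$ and the right-hand side is a quotient seminorm on the same finite-dimensional space, namely the norm of the image of $v_h$ in $V_{h,D}|_{\omega_z}/\bigl([H^1_D(\omega_z)]^3 \cap V_{h,D}|_{\omega_z}\bigr)$-type quantity measured through $\eps_h$. Since both sides are seminorms on the finite-dimensional space $V_{h,D}|_{\omega_z}$, it suffices to prove that the right-hand side vanishes only when the left-hand side vanishes, i.e.\ that $\inf_{v}\|\eps_h(v-v_h)\|_{0,\omega_z}=0$ forces $\int_F[v_h]_F\,ds=0$ for all $F\in\cF_z$. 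After establishing this kernel inclusion on a fixed patch, a standard homogeneity/scaling argument (the patch $\omega_z$ is a scaled, rotated copy of one of finitely many reference configurations up to the shape-regularity class, and both sides scale the same way under $x\mapsto x/h$, up to the explicit $h_F^{-1}$ weight) transfers the inequality with a constant depending only on the reference configuration, hence only on the angles and the combinatorial type of the patch.

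The heart of the matter is therefore the kernel characterization: suppose $v\in[H^1_D(\omega_z)]^3$ satisfies $\eps_h(v-v_h)=0$ on $\omega_z$; I must show $v_h$ has vanishing jump integrals across all faces in $\cF_z$ (and, for boundary faces with $F\subseteq\Gamma_D$, vanishing face averages). Since $\eps_h(v-v_h)=0$ piecewise and $v-v_h\in [P_1(T)]^3$ on each $T$, on every simplex $T$ in the patch $v-v_h|_T$ is an infinitesimal rigid body motion $a_T + b_T\times(x-z)$ with $a_T\in\R^3$, $b_T\in\R^3$. The conformity $v\in H^1$ means $v$ itself has no jumps, so across each interior face $F=T_+\cap T_-$ the jump $[v_h]_F$ equals the jump of the piecewise rigid field $a_{T_-}-a_{T_+} + (b_{T_-}-b_{T_+})\times(x-z)$ restricted to $F$. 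The structure of $V_{h,D}$ now enters decisively: the first and second components of $v_h$ are globally continuous ($\SCone$ and $\SCtwo$, or $\SCone+\mathcal B_\cF$), so $[v_h]_F$ has a vanishing first and second component; only the third component (in $\Snc$) can jump, and there the defining property of $\Snc$ already gives $\int_F[v_h]_F\,ds=0$ for interior faces and $\int_F v_h\,ds=0$ for faces on $\Gamma_D$. Wait — that would make the left-hand side identically zero, which is too strong; the point is that \eqref{efficiency} is not vacuous because the non-conforming third component can still have nonzero $[v_h]_F$ pointwise even with zero integral, and the seminorm on the left uses the full $L^2$ norm of the jump, not just its average. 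So the kernel argument must instead run: if $\eps_h(v-v_h)=0$ then, decomposing $v_h = v_h^{(1)}\oplus v_h^{(2)}\oplus v_h^{(3)}$ with the first two components conforming, the rigid-motion coefficients of $v-v_h$ must be compatible across faces in all three components; using that $v^{(3)}\in H^1_D(\omega_z)$ and $v_h^{(3)}\in\Snc$ has only mean-zero jumps, together with Assumptions (H1)–(H2) which prevent a spurious global infinitesimal rigid motion supported on $\omega_z$ (this is exactly the situation ruled out in Figure~\ref{f:counterexampleH1}), one concludes all the $b_T$ agree and all the $a_T$ agree, whence $[v_h]_F=0$ pointwise on $F\cap(\text{interface})$ for the conforming components and the mean-zero non-conforming jump is forced to be zero by the matching of the affine pieces on a face with three vertices (which is impossible under (H2) for interior faces, and controlled by (H1) on $\Gamma_D$). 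The subtlety is bookkeeping the interplay of the global Dirichlet condition $v|_{\partial\omega_z\cap\Gamma_D}=0$ with the face-mean condition defining $\Snc$ near $\Gamma_D$; this is precisely where $\NN(H1)$ and the two cases (a),(b) of Assumption (H1) are used.

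Concretely the steps are: (i) fix a representative patch configuration and set up the finite-dimensional spaces $V_{h,D}|_{\omega_z}$ and the seminorm/quotient-seminorm pair; (ii) prove the kernel inclusion by the rigid-motion analysis above, invoking (H1)–(H2) to exclude the degenerate configurations; (iii) deduce on the fixed patch the inequality $\sum_{F\in\cF_z}\|[v_h]_F\|_{0,F}^2 \le C\inf_v\|\eps_h(v-v_h)\|_{0,\omega_z}^2$ with $C$ depending only on the configuration, by equivalence of seminorms on a finite-dimensional space with the same kernel; (iv) restore the correct powers of $h$ by scaling, noting $\|[v_h]_F\|_{0,F}^2$ carries one factor of $h_F^2$ relative to $L^\infty$, the gradient/$\eps$ carries $h^{-2}$, and the domain measure carries $h^3$, so the weight $h_F^{-1}$ on the left exactly balances against the volume integral on the right; (v) absorb the (finitely many, shape-regularity-bounded) configuration types into a single constant $C$ depending on the angle bounds. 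The main obstacle is step (ii): making rigorous that (H1)–(H2) are exactly what is needed to kill the would-be kernel element, i.e.\ that no nonzero piecewise-rigid $v-v_h$ with the prescribed continuity/mean constraints and the Dirichlet condition on $\partial\omega_z\cap\Gamma_D$ can exist — everything else is a routine compactness-plus-scaling argument.
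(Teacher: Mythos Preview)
Your overall strategy is correct and matches the paper's: both sides are seminorms on the finite-dimensional space $V_{h,D}|_{\omega_z}$, so it suffices to show that the right-hand side vanishing forces the left-hand side to vanish, and then scale. The paper carries this out in exactly this way.

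However, your step (ii) glosses over the one place where real work happens, and your description of how (H1) and (H2) enter is not quite right. Once you know $v_h-v=\RM_z$ is piecewise rigid, the point is that continuity of the first two (conforming) components across an interior face $F$ gives two affine equations in the face variables; if $F$ is \emph{not horizontal} (i.e.\ $|\nu_F(3)|<1$) these equations determine five of the six rigid-motion parameters across $F$, and the mean-continuity of the third component fixes the sixth. Across a \emph{horizontal} face this fails: the two conforming components only pin down three parameters, and you cannot conclude continuity of $\RM_z$ directly. The paper then runs a case analysis on the horizontal faces in $\cF_z$: for $z\in\NN(\Omega)$, either the horizontal faces do not separate $\omega_z$, or they do and there are at least three of them so the affine jump of the third component (which has three vanishing face means) must vanish identically; for $z\in\NN(H1)$, conditions (a) and (b) supply either a non-horizontal Dirichlet face or three horizontal Dirichlet faces, and the same reasoning kills $\RM_z$. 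You invoke (H1)--(H2) only as a black box ``exclude degenerate configurations'', but this horizontal/non-horizontal dichotomy is the mechanism you need to exhibit. Your description of (H2)'s role (``impossible under (H2) for interior faces'') is off: (H2) is used only to guarantee that $\omega_z$ contains more than one simplex and that every relevant face lies in some $\cF_z$ (Remark~\ref{r:overlap}), not to constrain vertices of interior faces in the kernel argument.

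Finally, your step (v) is both unnecessary and incorrect as stated: there are not finitely many reference patch configurations (the geometry varies continuously even within a shape-regularity class), but the lemma explicitly permits $C$ to depend on the configuration of $\omega_z$, so only the scaling in step (iv) is needed.
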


\begin{proof}
In fact, both sides of \eqref{efficiency} define  seminorms for the 
restriction of  $v_h$ to $\om_z$.
 Suppose
$$
\inf\limits_{v\in [H^1_D(\om_z)]^3}\|\eps_h(v-v_h)\|_{0, \om_z}=0\,.
$$
This implies
\begin{align*}
v_h=v+\RM_z\,
\end{align*}
for some $v\in [H^1_D(\om_z)]^3$ and some piecewise rigid body motion $\RM_z$ which
 is of the form
$$
\RM_z|_T(x):=\begin{pmatrix}
        a_T-e_Tx_2-d_Tx_3\\
        b_T+e_Tx_1-f_Tx_3\\
        c_T+d_Tx_1+f_Tx_2\\
       \end{pmatrix}\quad \text{ for any } T\in \tri\text{ with }z\in T 
$$
for parameters $a_T\,,b_T\,,c_T\,,d_T\,,e_T\,,f_T$.
Assumption~(H2) guarantees that $\omega_z$ consists of more than one simplex.
Therefore,
consider a face $F\in\cF$ such that $F= T_1\cap T_2$ for some $T_k\in\tri$,
$T_k\subseteq\omega_z$ for $k=1,2$ and $F$ is not horizontal, i.e., 
$\lvert\nu_F(3)\rvert<1$. Then there exist parameters 
$\alpha,\beta,\gamma\in\R$ such that w.l.o.g.
\begin{align*}
  F\subseteq \{x\in\R^3\mid x_1+\alpha x_2 + \beta x_3=\gamma\}.
\end{align*}
Since the first two components of $v_h$ are continuous across internal faces, 
it follows 
\begin{align*}
  (a_{T_1}-a_{T_2}) - (e_{T_1}-e_{T_2}) x_2 - (d_{T_1}-d_{T_2})x_3&=0,\\
  (b_{T_1}-b_{T_2})+\gamma(e_{T_1}-e_{T_2}) - \alpha (e_{T_1}-e_{T_2}) x_2 
   - (f_{T_1}-f_{T_2}+\beta(e_{T_1}-e_{T_2}) )x_3&=0.
\end{align*}
Since this holds for all $x_2,x_3\in\R$, this leads to 
$$
a_{T_1}=a_{T_2},\;\;b_{T_1}=b_{T_2},\;\; d_{T_1}=d_{T_2},\; \;
e_{T_1}=e_{T_2},\;\;  f_{T_1}=f_{T_2}.
$$
Note that the integral mean of the last component of
$v_h$ is continuous across $F$. This leads to
$$
c_{T_1}=c_{T_2}.
$$
We conclude that $\RM_z$ is continuous across any face $F$ that is not 
horizontal. The same arguments prove that $\RM_z\vert_T$ vanishes, 
if $T$ contains a face $F\subseteq\Gamma_D$ that is not horizontal.

To conclude that $\RM_z$ is continuous on the whole patch $\omega_z$,
we let $\cF_{z,H}$ denote the set of faces in $\cF_z$ that are 
horizontal and consider the following cases.

\case{1: $\cF_{z,H}=\emptyset$}
In this case, $\RM_z$ is clearly continuous on $\omega_z$.

\case{2: $z\in\NN(\Om)$ and $\cF_{z,H}\neq\emptyset$} 
Let $D_\delta(z)$ denote the disc with radius $\delta$ 
and center $z$, i.e.,
\begin{align*}
 D_\delta(z):=\{x\in \omega_z\mid (x-z)\cdot (0,0,1)=0 
   \text{ and }\lvert x-z\rvert<\delta\}.
\end{align*}
We first consider the case that the faces in $\cF_{z,H}$ do not cover 
a whole disc, i.e., $D_\delta(z)\not\subseteq \bigcup \mathcal{F}_{z,H}$
for all $\delta>0$. Then $\omega_z\setminus\bigcup\cF_{z,H}$ is still connected and 
it follows that $\RM_z$ is continuous.

If the faces in $\cF_{z,H}$ contain a whole disc centred at $z$, i.e., 
there exists some $\delta>0$ such that 
$D_\delta(z)\subseteq \bigcup\mathcal{F}_{z,H}$, then  
the set $\om_z$ is  divided into two parts by the faces 
of $\mathcal{F}_{z,H}$.  
Let $\mathcal{F}_{z,H}^i\subseteq\mathcal{F}_{z,H}$ denote the set of 
those separating faces in $\mathcal{F}_{z,H}$ that are all faces that 
are not on $\Gamma_D$.
In each part, $\RM_z$ 
restricted to one of these parts is a global rigid body motion.  
The set $\mathcal{F}_{z,H}^i$
contains at least three faces, because $z$ is an interior vertex.
Since the jump across $\bigcup \mathcal{F}_{z,H}^i$
of the third component of $\RM_z$ is an affine function and vanishes at least 
at three different points that are not collinear, we have that $\RM_z$ is 
continuous.

If there exists some face $F\in\cF_z$ with $F\subseteq\Gamma_D$, then this 
face is not horizontal by the definition of $\cF_z$ for interior nodes.
Therefore, $\RM_z$ vanishes.

\case{3: $z\in\NN(H1)$ with $z$ from (a) from Assumption (H1)}
In this case, $\cF_z$ contains a face $F'\in\cF_z$ with $F'\subseteq\Gamma_D$ 
that is not horizontal. Therefore, $\RM_z$ vanishes.

\case{4: $z\in\NN(H1)$ with $z$ from (b) from Assumption (H1)}
In this case, there exist at least three faces $F,F',F''\in\cF_z$, 
which lie on the Dirichlet boundary and are horizontal.
Since the jump across $\bigcup \mathcal{F}_{z,H}$
of the third component of $\RM_z$ is an affine function and vanishes at least 
at the midpoints of these faces, we have that $\RM_z$ vanishes.

\medskip 
In all of the above cases, $\RM_z$ is continuous on $\omega_z$ and vanishes if 
$\cF_z$ contains Dirichlet boundary faces. If $F\in\cF_z$ and 
$F\not\subseteq \Gamma_D$, then $\operatorname{int}(F)\subseteq \omega_z$ for 
the relative interior $\operatorname{int}(F)$ of $F$. Therefore, 
$$
\sum\limits_{F\in \cF_z}h_F^{-1}\|[v_h]_F\|_{0, F}^2=0\,.
$$
In other words, the left-hand side 
of~\eqref{efficiency} vanishes. 
Hence, the two seminorms of the left and right side of~\eqref{efficiency}
satisfy 
\begin{align*}
 \sum\limits_{F\in \cF_z}h_F^{-1}\|[v_h]_F\|_{0, F}^2
\leq C \inf\limits_{v\in[ H^1_D(\om_z)]^3}\|\eps_h(v-v_h)\|_{0, \om_z}^2.
\end{align*}
A scaling argument shows that $C$ is independent of the mesh-size.
\end{proof}

\begin{remark}
 Note that the proof of Lemma~\ref{Lemmaefficiency} does only have to  
 control piecewise rigid body motions and, therefore, the proof 
 (and, hence, also Theorem~\ref{t:korn} below) holds true 
 for any choice of finite element space for the first and second component
 as long as they are conforming.
\end{remark}

With this lemma, we are in the position to prove the discrete Korn inequality.

\begin{theorem}\label{t:korn}
Assume that Assumptions (H1) and (H2) hold and that $h_F\lesssim 1$ for all 
$F\in\mathcal{F}(\Gamma_D)$. 
Let $V_{h,D}$ be $V_{2,D}$ or $V_{\cF,D}$.
There exists a positive constant $\beta$ independent of the
meshsize  such that
$$
 \beta \|v_h\|_{1,h}\leq \|\varepsilon_h(v_h)\|_0
 \qquad\text{ for all }  v_h\in V_{h,D}\,.
$$
\end{theorem}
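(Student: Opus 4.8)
The plan is to establish the discrete Korn inequality by a compactness/contradiction argument combined with the local efficiency estimate of Lemma~\ref{Lemmaefficiency}. The starting point is the standard ``broken Korn inequality'' for piecewise $H^1$ vector fields (see, e.g., Brenner's work on Korn inequalities for nonconforming spaces), which states that for any $v_h\in V_{h,D}$,
\begin{align*}
 \|v_h\|_{1,h}^2
  \lesssim \|\eps_h(v_h)\|_0^2
   + \sum_{F\in\cF(\Omega)}h_F^{-1}\|\Pi_F[v_h]_F\|_{0,F}^2
   + \sum_{F\in\cF(\Gamma_D)}h_F^{-1}\|\Pi_F v_h\|_{0,F}^2,
\end{align*}
where $\Pi_F$ is the $L^2$-projection onto the relevant low-order polynomial space on $F$ (affine, say). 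Since the first two components of any $v_h\in V_{h,D}$ are globally continuous and the nonconforming third component has continuous integral means across faces and vanishing integral mean on Dirichlet faces, the projected jump terms $\Pi_F[v_h]_F$ and $\Pi_F v_h$ reduce exactly to the (full) jump/trace seminorm controlled by the left-hand side of \eqref{efficiency}. Hence it suffices to bound $\sum_{F}h_F^{-1}\|[v_h]_F\|_{0,F}^2$ (summed over interior and Dirichlet faces) by $\|\eps_h(v_h)\|_0^2$.

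The key step is then to cover each face $F\in\cF(\Omega)\cup\cF(\Gamma_D)$ by one of the patches $\omega_z$ with $z\in\NN(\Omega)\cup\NN(H1)$, which is exactly what Remark~\ref{r:overlap} (via assumptions (H1) and (H2)) provides, and to apply Lemma~\ref{Lemmaefficiency} on each such patch. Summing \eqref{efficiency} over all $z\in\NN(\Omega)\cup\NN(H1)$ and using that the patches $\omega_z$ have bounded overlap (by shape-regularity), one gets
\begin{align*}
 \sum_{F\in\cF(\Omega)\cup\cF(\Gamma_D)}h_F^{-1}\|[v_h]_F\|_{0,F}^2
 \lesssim \sum_{z}\inf_{v\in[H^1_D(\omega_z)]^3}\|\eps_h(v-v_h)\|_{0,\omega_z}^2
 \lesssim \|\eps_h(v_h)\|_0^2,
\end{align*}
where in the last inequality one chooses $v=0$ on each patch (noting $0\in[H^1_D(\omega_z)]^3$) so that each infimum is bounded by $\|\eps_h(v_h)\|_{0,\omega_z}^2$. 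Combining this with the broken Korn inequality yields $\|v_h\|_{1,h}\lesssim\|\eps_h(v_h)\|_0$, i.e., the claimed inequality with some $\beta>0$.

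The main obstacle is not the abstract structure but making sure the constant in Lemma~\ref{Lemmaefficiency} is genuinely mesh-size independent and that it can be summed: the lemma is proved by a finite-dimensional argument on each reference patch configuration, so one needs that shape-regularity bounds the number of patch ``shapes'' up to affine equivalence and bounds the overlap of the $\{\omega_z\}$. A secondary technical point is verifying that the projected jump terms appearing in the broken Korn inequality are indeed controlled termwise by $h_F^{-1}\|[v_h]_F\|_{0,F}^2$ — this is where the specific structure of $V_{h,D}$ (two conforming components, one Crouzeix–Raviart-type component) enters, and where the hypothesis $h_F\lesssim 1$ on $\cF(\Gamma_D)$ is used to absorb the lower-order $\|v_h\|_0$ contribution. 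The remark following Lemma~\ref{Lemmaefficiency} already notes that only piecewise rigid body motions need to be controlled, so the argument is insensitive to whether the second component is $\SCtwo$ or $\SCone+\cB_\cF$.
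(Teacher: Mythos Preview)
Your proposal is correct and follows essentially the same route as the paper's proof: invoke Brenner's broken Korn inequality to reduce matters to controlling $\sum_{F\in\cF(\Omega)\cup\cF(\Gamma_D)}h_F^{-1}\|[v_h]_F\|_{0,F}^2$, then use Remark~\ref{r:overlap} to cover every such face by some $\cF_z$ with $z\in\NN(\Omega)\cup\NN(H1)$, apply Lemma~\ref{Lemmaefficiency} on each patch, and sum using bounded overlap with the trivial choice $v=0$ in the infimum. The paper organizes the first step slightly differently---it separates the discrete Poincar\'e inequality \cite[(1.5)]{Brenner2003} (this is where $h_F\lesssim 1$ enters, to absorb $\lvert\int_{\Gamma_D}v_h\,ds\rvert^2$ and later $\|v_h\|_{L^2(\Gamma_D)}^2$ into the Dirichlet-face jump sum) from the discrete Korn inequality \cite[(1.19)]{Brenner2004}---but the substance is the same.
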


\begin{proof}  
The discrete Poincare inequality from \cite[(1.5)]{Brenner2003} implies 
\begin{align*}
 \|v_h\|_{0}^2
   \lesssim \|\na_h v_h\|_{0}^2
     + \left\lvert\int_{\Gamma_D} v_h\,ds\right\rvert^2
   \lesssim \|\na_h v_h\|_{0}^2 
     +\sum_{F\in\cF(\Gamma_D)} h_F^{-1} \|[v_h]_F\|_{0,F}^2
\end{align*}
provided $h_F\lesssim 1$ for all $F\in\cF(\Gamma_D)$.
The discrete Korn inequality from~\cite[(1.19)]{Brenner2004} then leads to 
\begin{align*}
 \|v_h\|_{0}^2 + \|\na_h v_h \|_{0}^2
 &\leq C \left(\|\eps_h(v_h)\|_0^2 + \|v_h\|_{L^2(\Gamma_D)}^2 
   + \sum_{F\in\cF(\Omega)\cup\cF(\Gamma_D)} h_F^{-1} \|[v_h]_F\|_{L^2(F)}^2 \right)\\
 &\lesssim C \left(\|\eps_h(v_h)\|_0^2
   + \sum_{F\in\cF(\Omega)\cup\cF(\Gamma_D)} h_F^{-1} \|[v_h]_F\|_{L^2(F)}^2 \right).
\end{align*}
Lemma~\ref{Lemmaefficiency} and Remark~\ref{r:overlap} then yield the assertion.
\end{proof}

%%%%%%%%%%%%%%%%%%%%%%%%%%%%%%%%%%%%%%
% inf-sup
%%%%%%%%%%%%%%%%%%%%%%%%%%%%%%%%%%%%%%
For the proof of the inf-sup condition, define 
for any interior vertex $z$ the associated
macroelement by
$$
\mathcal{M}=\mathcal{M}(z)=\{T\in\tri\mid z\in T\}
$$
and let 
\begin{align*}
 \Omega_\mathcal{M} = \operatorname{int}\big( \bigcup \mathcal{M}\big).
\end{align*}
Furthermore define the bilinear form $\mathcal{B}$ for all 
$v_h\in V_{2,D}\cup V_{\cF,D}$ and $q_h\in Q_h$ by
\begin{align*}
 \mathcal{B}(u_h,p_h;v_h,q_h)
   := a_h(u_h,v_h) + b_h(v_h,p_h) - b_h(u_h,q_h) 
\end{align*}
and the norm
\begin{align*}
 \ennorm{(v_h,q_h)}_h^2:=\|\nabla_h v_h\|_{L^2(\Omega)}^2 
   + \|q_h\|_{L^2(\Omega)}^2 .
\end{align*}

\begin{theorem}\label{t:infsup} 
Let $V_{h,D}$ be $V_{2,D}$ or $V_{\cF,D}$.
If Assumptions~(H1) and (H2) are satisfied, then 
there exists  a positive  constant $\alpha$ independent of the mesh-size, 
such that
\begin{align*}
 \sup_{(v_h,q_h)\in (V_{h,D}\times Q_h)\setminus\{0\}}
    \frac{\mathcal{B}(u_h,p_h;v_h,q_h)}{\ennorm{(v_h,q_h)}_h}
    \geq \alpha \ennorm{(u_h,p_h)}_h
  \qquad\text{for all }(u_h,p_h)\in V_{h,D}\times Q_h.
\end{align*}
\end{theorem}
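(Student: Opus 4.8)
The plan is to deduce the combined inf-sup inequality from two ingredients plus the standard saddle-point argument. The first ingredient is that $a_h$ is coercive on all of $V_{h,D}$ with respect to $\|\na_h\cdot\|_0$, which is immediate from the discrete Korn inequality of Theorem~\ref{t:korn}: $a_h(v_h,v_h)=2\mu\|\eps_h(v_h)\|_0^2\gtrsim\|v_h\|_{1,h}^2\gtrsim\|\na_h v_h\|_0^2$. The second ingredient is a discrete LBB condition for $b_h$ alone, i.e.\ a mesh-independent $\beta_0>0$ with
\begin{equation*}
 \sup_{v_h\in V_{h,D}\setminus\{0\}}\frac{b_h(v_h,q_h)}{\|\na_h v_h\|_0}\ \geq\ \beta_0\,\|q_h\|_0
 \qquad\text{for all }q_h\in Q_h .
\end{equation*}
Granting these, I would finish in the usual way: given $(u_h,p_h)$, pick $w_h\in V_{h,D}$ with $b_h(w_h,p_h)\geq\beta_0\|p_h\|_0^2$ and $\|\na_h w_h\|_0=\|p_h\|_0$, test $\mathcal B(u_h,p_h;\cdot,\cdot)$ against $(u_h+\delta w_h,-p_h)$, and combine the coercivity of $a_h$, the obvious continuity bounds for $a_h$ and $b_h$, and a Young inequality to get $\mathcal B\gtrsim\ennorm{(u_h,p_h)}_h^2$ for $\delta$ small enough while $\ennorm{(u_h+\delta w_h,-p_h)}_h\lesssim\ennorm{(u_h,p_h)}_h$; see \cite{BoffiBrezziFortin2013}.

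The real work is the LBB condition for $b_h$, which I would prove by Stenberg's macroelement technique with the macroelements $\Om_{\mathcal M(z)}$, $z\in\NN(\Om)$, introduced above. These cover $\ov{\Om}$ with uniformly bounded overlap (by (H2) every tetrahedron has an interior vertex), their macroelement graph is connected, and an interior face $F\in\cF(\Om)$ is an interior face of $\Om_{\mathcal M(z)}$ precisely for $z\in F$. Writing $V^0_{h,\mathcal M}\subseteq V_{h,D}$ for the functions with support in $\ov{\Om_{\mathcal M}}$ (conforming components vanishing at the vertices of $\partial\Om_{\mathcal M}$, the nonconforming component having zero face integrals on $\partial\Om_{\mathcal M}$, and only face bubbles of interior faces present), the local statement I need is that $N_{\mathcal M}:=\{q\in Q_h|_{\Om_{\mathcal M}}\mid b_h(v_h,q)=0\ \text{for all }v_h\in V^0_{h,\mathcal M}\}$ equals the space of functions constant on $\Om_{\mathcal M}$.

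To prove this I would use that every interior face of $\Om_{\mathcal M}$ passes through $z$, so it is enough to show $[q]_F=0$ on all of them. Testing $b_h((0,0,v_3),q)=0$ with the nonconforming third component and integrating by parts elementwise yields $\sum_F[q]_F\,\nu_F(3)\int_F v_3\ds=0$; since the interior face integrals are free degrees of freedom of $\Snc$, this forces $[q]_F=0$ whenever $\nu_F(3)\neq0$. Testing with $(0,\varphi_F,0)$ for each interior face $F$, using $\int_F\varphi_F=|F|>0$ and that $\varphi_F$ vanishes on every other face, gives $[q]_F\,\nu_F(2)\,|F|=0$, hence $[q]_F=0$ whenever $\nu_F(2)\neq0$ (for $V_{2,D}$ one replaces $\varphi_F$ by the $P_2$ functions $\lambda_z\lambda_b$ for edges $\{z,b\}\subseteq F$ and resolves the resulting system over edge stars in the same manner). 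The only faces left are those with $|\nu_F(1)|=1$; they all lie in the single plane $\{x_1=z_1\}$, so $q$ is constant, say equal to $q_\pm$, on each of the (at most two) parts of $\Om_{\mathcal M}$ they separate. Finally, testing with $(c\,\phi_z,0,0)$, where $\phi_z$ is the conforming nodal basis function at $z$, gives $(q_+-q_-)\sum_{F:\,|\nu_F(1)|=1}\int_F\phi_z\ds=0$; the sum is strictly positive, so $q_+=q_-$ and $q$ is constant on $\Om_{\mathcal M}$.

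From the local claim I would obtain the global LBB condition by the usual Boland--Nicolaides argument: each $\Om_{\mathcal M}$ satisfies a local inf-sup on the complement of the constants with a mesh-independent constant (scaling argument), producing for every $q_h$ a genuine finite element velocity in $V^0_{h,\mathcal M}$ that controls $q_h-\langle q_h\rangle_{\Om_{\mathcal M}}$; these local fields assemble (bounded overlap) into one field in $V_{h,D}$, and a discrete Poincar\'e inequality along the connected macroelement graph bounds $\|q_h\|_0$ up to a constant by the sum of the local deviations; the remaining constant mode is pinned down by a test field with prescribed normal flux across $\Gamma_N$, which is available since $\Gamma_N$ has positive two-dimensional measure. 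Assumptions (H1) and (H2) enter exactly as in Theorem~\ref{t:korn}. I expect the main obstacle to be the local macroelement analysis itself — proving $N_{\mathcal M}=\{\text{constants}\}$ with the correct bookkeeping of the axis-parallel faces and of the combinatorially different patch configurations, the $V_{2,D}$ case with $P_2$ edge bubbles in place of face bubbles being the more delicate one — and, secondarily, assembling the local estimates into a genuinely mesh-independent global constant while correctly handling the piecewise-constant pressure mode under the mixed Dirichlet/Neumann boundary condition.
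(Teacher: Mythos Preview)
Your proposal is correct and follows essentially the same route as the paper: Stenberg's macroelement technique on vertex patches $\mathcal{M}(z)$, the three-axis argument (nonconforming component kills jumps with $\nu_F(3)\neq 0$, enriched second component kills jumps with $\nu_F(2)\neq 0$, the hat function at $z$ kills the remaining $|\nu_F(1)|=1$ jumps), followed by the standard saddle-point combination with the discrete Korn inequality. The paper's Step~2 is organised slightly differently from your $(u_h+\delta w_h,-p_h)$ test, but both are standard and equivalent.

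One point deserves more care than your sketch gives it: in the $V_{2,D}$ case the edge bubble $\lambda_z\lambda_b$ is supported on the full edge star, not on $\omega_F$, so testing with it produces a \emph{sum} of jump contributions over all vertical faces containing the edge $\{z,b\}$, not $[q]_F\nu_F(2)$ alone. The paper resolves this by observing that a non-vertical edge through $z$ lies in a unique vertical plane, so all those faces share the same normal and the jump across the plane is a single constant; this is where the requirement ``$E$ not vertical'' enters. Your phrase ``resolves the resulting system over edge stars in the same manner'' hides exactly this step, which is the genuinely delicate part of the $V_{2,D}$ argument. Conversely, your explicit handling of the global constant pressure mode via a velocity with prescribed normal flux on $\Gamma_N$ is something the paper delegates to \cite{Stenberg1990} without comment.
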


\begin{proof}
The proof is divided into two steps.
\step{1}
We use the macroelement trick from \cite{KouhiaStenberg1995}.
To this end, let $z\in\NN(\Omega)$ be an interior node with macroelement 
$\mathcal{M}$.
Define 
\begin{align*}
 \widetilde{V}_{0,\mathcal{M}}&:=
   \left\{v\in [L^2(\Omega_\mathcal{M})]^3\left|
     \begin{array}{l}
       v=(v_1,v_2,v_3)\text{ with }v_1\in H^1_0(\Omega_\mathcal{M}),\
        v_2\in H^1_0(\Omega_\mathcal{M}),\\
        \forall T\in \mathcal{M}\ \forall i=1,3:\; v_i|_T\in P_1(T),\\
         \int_F[v_3]\ds=0 \text{ for any interior face $F$ of } \mathcal{M},\\
         \int_Fv_3\ds=0 \text{ for any face } F\subseteq \pa \Omega_\mathcal{M}
     \end{array}
    \right\}\right. \,,\\
  Q_{\mathcal{M}}&:=\{q\in L^2(\Omega_\mathcal{M})\mid  
  \forall T\in\mathcal{M},\; q|_T\text{ is constant}\}\,,\\
N_\mathcal{M}&:=\left\{q\in Q_\mathcal{M} \mid \forall v\in V_{0,\mathcal{M}}:
  \; \int_{\Omega}q\div_h v \dx=0\right\}\,.
\end{align*}
In the case that $V_{h,D}$ equals $V_{2,D}$, let
\begin{align*}
V_{0,\mathcal{M}}&:=
   \left\{v\in \widetilde{V}_{0,\mathcal{M}} \mid
        \forall T\in \mathcal{M}\; v_2\vert_T\in P_2(T)
    \right\} \,,
\end{align*}
while in the case that $V_{h,D}$ equals $V_{\cF,D}$, we set 
\begin{align*}
 V_{0,\mathcal{M}}&:=
   \left\{v\in \widetilde{V}_{0,\mathcal{M}} \mid
        \forall T\in \mathcal{M}\; 
        v_2\vert_T\in P_1(T)+\operatorname{span}\{\varphi_F\mid 
             F\in\cF(\Omega_\mathcal{M})\}
    \right\} \,.
\end{align*}
Let $q\in N_\mathcal{M}$.
Define for any $F$ with $\nu_F(3)\not=0$ a function 
$v\in V_{0,\mathcal{M}}$ by $v_1=0\,,v_2=0$, $\int_{F}v_{3}\ds=1$,  and
$\int_{F^\prime}v_{3}\ds=0$ for any face $F^\prime\neq F$. 
Then an integration by parts implies
\begin{align*}
0&=\int_{\omega_z} q \div_h v\,dx
 = \int_{T_{+}}\frac{\pa v_3}{\pa x_3}q\vert_{T_{+}}\dx
+\int_{T_{-}}\frac{\pa v_3}{\pa x_3}q\vert_{T_{-}}\dx
=\nu_{3}(q\vert_{T_{+}}-q\vert_{T_{-}})\,.
\end{align*}
Since $\nu_F(3)\neq 0$, this implies
$$
q\vert_{T_{+}}=q\vert_{T_{-}}\,.
$$
It follows that $q$ can only jump across vertical faces, i.e., if $\nu_F(3)=0$;
see Figure~\ref{f:illuproofinfsup1} for a possible configuration of vertical 
hyperplanes where $q$ can jump.

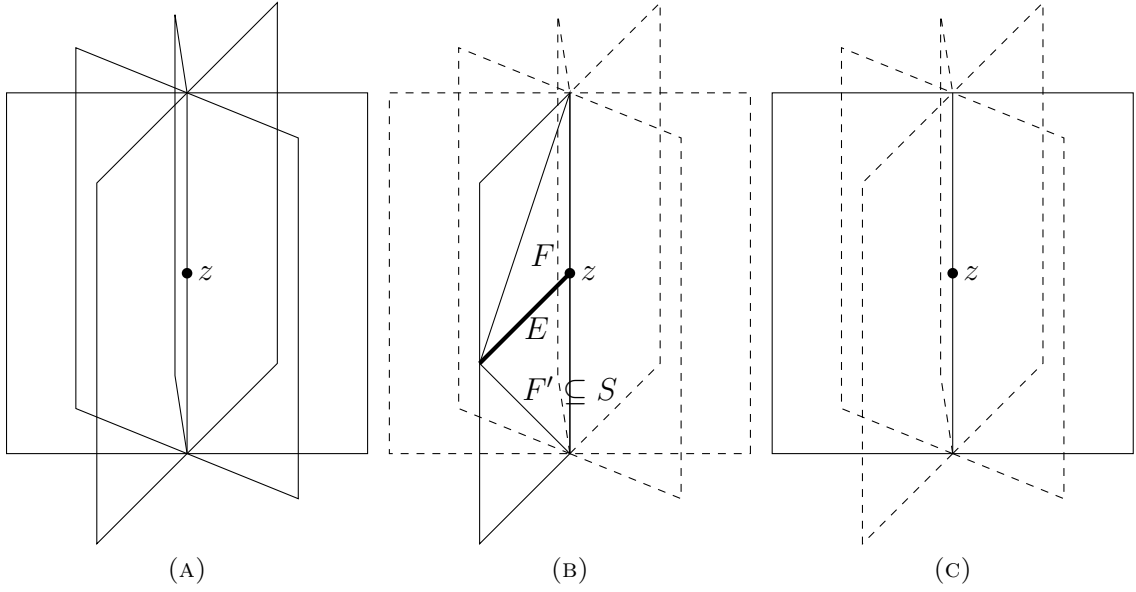
\begin{figure}
  \subfloat[\label{f:illuproofinfsup1}]{
  \begin{tikzpicture}[x=1.2cm,y=1.2cm]
    \draw (0,-2)--(0,2);
    \draw (0,-2)--(1,-1)--(1,3)--(0,2);
    \draw (2,-2)--(2,2)--(-2,2)--(-2,-2)--cycle;
    \draw (0,-2)--(1.2321,-2.5)--(1.2321,1.5)--(0,2);
    \draw (0,-2)--(-1,-3)--(-1,1)--(0,2);
    \draw (0,-2)--(-0.13397,-1.134)--(-0.13397,2.866)--(0,2);
    \draw (0,-2)--(-1.2321,-1.5)--(-1.2321,2.5)--(0,2);
    \fill (0,0) circle (2pt);
    \node (z) at (0,0)[right]{$z$};
  \end{tikzpicture}
  }
  \subfloat[\label{f:illuproofinfsup2}]{
  \begin{tikzpicture}[x=1.2cm,y=1.2cm]
    \draw (0,-2)--(0,2);
    \draw[dashed] (0,-2)--(1,-1)--(1,3)--(0,2);
    \draw[dashed] (2,-2)--(2,2)--(-2,2)--(-2,-2)--cycle;
    \draw[dashed] (0,-2)--(1.2321,-2.5)--(1.2321,1.5)--(0,2);
    \draw (0,-2)--(-1,-3)--(-1,1)--(0,2);
    \draw[dashed] (0,-2)--(-0.13397,-1.134)--(-0.13397,2.866)--(0,2);
    \draw[dashed] (0,-2)--(-1.2321,-1.5)--(-1.2321,2.5)--(0,2);
    \fill (0,0) circle (2pt);
    \node (z) at (0,0)[right]{$z$};
    \draw[ultra thick] (0,0)--(-1,-1);
    \node (midE) at (-0.37,-0.37)[below]{$E$};
    \draw (-1,-1)--(0,2)--(0,0);
    \node () at (-0.3,0.2){$F$};
    \draw (-1,-1)--(0,-2)--(0,0);
    \node () at (0,-1.3){$F'\subseteq S$};
  \end{tikzpicture}
  }
  \subfloat[\label{f:illuproofinfsup3}]{
  \begin{tikzpicture}[x=1.2cm,y=1.2cm]
    \draw (0,-2)--(0,2);
    \draw[dashed] (0,-2)--(1,-1)--(1,3)--(0,2);
    \draw (2,-2)--(2,2)--(-2,2)--(-2,-2)--cycle;
    \draw[dashed] (0,-2)--(1.2321,-2.5)--(1.2321,1.5)--(0,2);
    \draw[dashed] (0,-2)--(-1,-3)--(-1,1)--(0,2);
    \draw[dashed] (0,-2)--(-0.13397,-1.134)--(-0.13397,2.866)--(0,2);
    \draw[dashed] (0,-2)--(-1.2321,-1.5)--(-1.2321,2.5)--(0,2);
    \fill (0,0) circle (2pt);
    \node (z) at (0,0)[right]{$z$};
  \end{tikzpicture}
  }
  \caption{\label{f:illuproofinfsup}Illustration of vertical hyperplanes 
  in the proof of the inf-sup condition.}
\end{figure}

Let now $F$ be a vertical face.
We now have to treat the two different possible choices of ansatz spaces 
separately.

\textit{Case 1: $V_{h,D}=V_{2,D}$.}
Let $E\subseteq F$ be an edge of 
$\mathcal{M}$ that satisfies the following conditions.
\begin{itemize}
 \item $\nu_F\vert_E(2)\neq 0$,
 \item $E$ is an interior edge of $\Omega_\mathcal{M}$
    in the sense that $\mathrm{int}(E)\subseteq \Omega_\mathcal{M}$ for the 
    relative interior $\mathrm{int}(E)$ of $E$ (i.e., $E$ without its 
    endpoints),
 \item $E$ is not vertical, i.e., $\lvert x-z\rvert \neq \lvert (x-z)(3)\rvert$
    for all $x\in E\setminus\{z\}$.
\end{itemize}
See Figure~\ref{f:illuproofinfsup2} for an illustration of a possible 
configuration.
Define a function $v\in V_{0,\mathcal{M}}$ by $v=(0,v_2,0)$ with 
\begin{equation}\label{e:defv2proofinfsup}
\begin{aligned}
  v_2(\mathrm{mid}(E))&=1,\\  
 v_2(\mathrm{mid}(E'))&=0 \text{ for all edges }E'\text{ of }
\mathcal{M}\text{ with }E\neq E'\\
 \text{ and }
v_2(\tilde z)&=0\text{ for all nodes }\tilde z
\text{ of }\mathcal{M}. 
\end{aligned}
\end{equation}
Define 
\begin{align*}
  S:=\bigcup \{F'\mid E\subseteq F'\text{ and }F' \text{ is an interior face of }
\mathcal{M} \text{ and }\nu_{F'}(3)=0\}. 
\end{align*}
Since $q\in N_\mathcal{M}$ can only jump across vertical faces, an integration 
by parts proves 
\begin{align*}
  0=\int_{\Omega_\mathcal{M}} q\ddiv_h v\,dx 
   = \int_{\Omega_\mathcal{M}} q\frac{\partial v_2}{\partial x_2}\,dx 
   = \int_S [q]_S v_2 \nu_S(2)\,ds
   =  \nu_{S}(2)\, [q]_S\,  \int_S v_2\,ds\,.
\end{align*}
Since $\int_S v_2\,ds=\mathrm{area}(S)/12\neq 0$, this implies that 
$q$ is continuous at $F$. Therefore, it can only jump at vertical faces 
with $\lvert\nu_F(1)\rvert=1$ (otherwise there exists an edge $E$ that 
satisfies the above conditions). Figure~\ref{f:illuproofinfsup3} illustrates 
a vertical hyperplane with $\lvert\nu_F(1)\rvert=1$.

\textit{Case 2: $V_{h,D}=V_{\cF,D}$.}
In this case, define a function $v\in V_{0,\mathcal{M}}$ by 
$v=(0,v_2,0)$ with 
\begin{align}\label{e:defv2proofinfsup2}
v_2=\varphi_F. 
\end{align}
Then $v_2\vert_{F'}=0$ for all 
faces $F'\in\cF$ with $F'\neq F$. Since $q\in N_\mathcal{M}$ can only jump 
across vertical faces, an integration by parts proves 
\begin{align*}
 0=\int_{\Omega_\mathcal{M}} q\ddiv_h v\,dx
  = \int_{\Omega_\mathcal{M}} q\frac{\partial v_2}{\partial x_2}\,dx 
  = \int_F [q]_F v_2 \nu_F(2)\,ds 
  =\nu_F(2) [q]_F \int_F v_2\,ds.
\end{align*}
Since $\int_F v_2\,ds=\operatorname{area}(F)\neq 0$, 
this implies that $q$ is continuous at $F$ whenever $\nu_F(2)\neq 0$.

\medskip
\textit{In both cases,} the only situation where $q$ can jump is at vertical 
faces with $\lvert\nu_F(1)\rvert=1$, see Figure~\ref{f:illuproofinfsup3}
for an illustration.
Define $v=(v_1,0,0)\in V_{0,\mathcal{M}}$ by $v_1(z)=1$ and let 
\begin{align*}
  S:=\bigcup\{F'\mid F'\text{ is an interior face of }\mathcal{M} \text{ and }
\nu_F(1)=1\}.
\end{align*}
Since $q$ can only jump across $S$, an integration by parts then proves
\begin{align*}
  0=\int_{\Omega_\mathcal{M}} q\ddiv_h v\,dx 
   = \int_{\Omega_\mathcal{M}} q\frac{\partial v_1}{\partial x_1}\,dx 
   = \int_S [q]_S v_1 \nu_S(1)\,ds
   =  [q]_S\,  \int_S v_1\,ds\,.
\end{align*}
Since $\int_S v_1\,ds=\mathrm{area}(S)/3\neq 0$, this implies that 
$q$ is continuous on $\Omega_\mathcal{M}$.

Let $\mathcal{F}(z):=\{F\in\cF\mid z\in F\}$ 
denote the set of faces that share the vertex $z$.
The above argument proves that the two seminorms
\begin{align*}
 \rho_1(p_h)&:=\sqrt{\sum_{F\in\cF(z)} h_F \|[p_h]_F\|_{0,F}^2},\\
 \rho_2(p_h)&:=\sup_{v_h\in V_{0,\mathcal{M}}\setminus\{0\}} 
    \frac{\int_{\omega_z} p_h\div_h v_h\dx}{\|\nabla_h v\|_{0}}
\end{align*}
are equivalent on $Q_\mathcal{M}$. A scaling argument proves that the constant is 
independent of the mesh-size.
This proves a local inf-sup condition with respect to the (semi)-norm 
$\rho_1$. Assumption (H2) guarantees that the domain can 
be covered by the macroelements $\cM$. 
Then, \cite[Lemmas~1--4]{Stenberg1990} proves the global inf-sup condition
\begin{align*}
  \|p_h\|_ {0} 
    \lesssim \sup_{v\in V_{h,D}\setminus\{0\}} 
       \frac{b_h(v_h,p_h)}{\|\nabla_h v_h\|_{0}}
\end{align*}
with $p_h$ measured in the $L^2$ norm.

\step{2}
Let $(u_h,p_h)\in V_{h,D}\times Q_h$ be given and define 
for abbreviation
\begin{align*}
 B:=\sup_{(v_h,q_h)\in (V_{h,D}\times Q_h)\setminus\{0\}}
    \frac{\mathcal{B}(u_h,p_h;v_h,q_h)}{\ennorm{(v_h,q_h)}_h}.
\end{align*}
Step~1 guarantees the existence of $v_h\in V_{h,D}$ with 
$\|\nabla_h v_h\|_ {L^2(\Omega)}=1$ and
\begin{align*}
 \|p_h\|_ {0}\lesssim b_h(v_h,p_h) = \mathcal{B}(u_h,p_h;v_h,0) - a_h(u_h,v_h)
   \leq B + \|\nabla_h u_h\|_{0},
\end{align*}
which implies 
\begin{align*}
 \ennorm{(u_h,p_h)}_h^2
  \lesssim B^2 + \|\nabla_h u_h\|_ {0}^2.
\end{align*}
The discrete Korn inequality from Theorem~\ref{t:korn} implies 
\begin{align*}
 \|\nabla_h u_h\|_{0}
  \lesssim \|\varepsilon_h(u_h)\|_0.
\end{align*}
This implies,
\begin{align*}
 \|\nabla_h u_h\|_ {0}^2
 &\lesssim \|\varepsilon_h(u_h)\|_ {0}^2
   = \mathcal{B}(u_h,0;u_h,0)
        \leq B \;\ennorm{(u_h,p_h)}_h,
\end{align*}
and, therefore, 
\begin{align*}
 \ennorm{(u_h,p_h)}_h
  \lesssim B .
\end{align*}
This concludes the proof.
\end{proof}

\begin{remark}
The proof of Theorem~\ref{t:infsup} does not work in the case 
$V_{h,D}=\SCone\times \SCone\times \Snc$:
In this situation, the test functions $v_2$ that were defined 
in~\eqref{e:defv2proofinfsup} and in~\eqref{e:defv2proofinfsup2} have 
only one degree of freedom and, therefore, only the linear combination 
\begin{align*}
 \sum_{F\in\cF(\Omega_\mathcal{M}),F\text{ is vertical }}
   [q]_F \nu_F(2) \operatorname{area}(F) 
   = 0
\end{align*}
has to vanish, but the continuity on all $F$ with 
$\lvert\nu_F(2)\rvert<1$ cannot be concluded.
\end{remark}

From the discrete inf-sup condition from Theorem~\ref{t:infsup}, 
the discrete Korn inequality from Theorem~\ref{t:korn}, and the 
standard theory in mixed FEMs \cite{BoffiBrezziFortin2013},
we can immediately show the well-posedness of the problem which is stated 
in the following corollary.

\begin{corollary}\label{c:uniqexdisc}
Let $V_{h,D}=V_{2,D}$ or $V_{h,D}=V_{\mathcal{F},D}$.
There exists a unique solution $(u_h,p_h)\in V_{h,D}\times Q_h$
to~\eqref{StokesDisc} and it satisfies
\begin{align*}
 \|u_h\|_{1,h} + \|p_h\|_{L^2(\Omega)}
   \lesssim \|g\|_{L^2(\Omega)}.
\end{align*}
\end{corollary}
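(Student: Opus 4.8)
The plan is to deduce Corollary~\ref{c:uniqexdisc} from the standard saddle-point theory for mixed finite element methods, using Theorems~\ref{t:korn} and~\ref{t:infsup} as the two stability pillars. First I would observe that the bilinear form $a_h(\cdot,\cdot)$ is bounded on $V_{h,D}\times V_{h,D}$ with respect to $\|\cdot\|_{1,h}$ (this is immediate from $\|\eps_h(v_h)\|_0\le\|\na_h v_h\|_0\le\|v_h\|_{1,h}$ and Cauchy--Schwarz), that $b_h(\cdot,\cdot)$ is bounded on $V_{h,D}\times Q_h$ (again Cauchy--Schwarz, since $|\div_h v_h|\lesssim|\na_h v_h|$ pointwise), and that $a_h$ is coercive on the whole space $V_{h,D}$: by Theorem~\ref{t:korn}, $a_h(v_h,v_h)=2\mu\|\eps_h(v_h)\|_0^2\ge 2\mu\beta^2\|v_h\|_{1,h}^2$. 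In particular $a_h$ is coercive on the kernel $Z_h:=\{v_h\in V_{h,D}\mid b_h(v_h,q_h)=0\ \forall q_h\in Q_h\}$, which is the only coercivity actually needed.

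Next I would invoke the discrete inf-sup (LBB) condition. Strictly speaking Theorem~\ref{t:infsup} is stated in the combined Brezzi form for the full bilinear form $\mathcal{B}$, but Step~1 of its proof establishes precisely the classical inf-sup inequality
\begin{align*}
  \|q_h\|_{0}\lesssim \sup_{v_h\in V_{h,D}\setminus\{0\}}\frac{b_h(v_h,q_h)}{\|\na_h v_h\|_{0}}
  \qquad\text{for all }q_h\in Q_h,
\end{align*}
which together with $\|\na_h v_h\|_0\le\|v_h\|_{1,h}$ gives the inf-sup condition with the $\|\cdot\|_{1,h}$-norm on $V_{h,D}$ and a mesh-independent constant. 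With boundedness of $a_h$ and $b_h$, coercivity of $a_h$ on $Z_h$, and the discrete inf-sup condition all in hand, the Babu\v{s}ka--Brezzi theorem (see \cite{BoffiBrezziFortin2013}) applies verbatim and yields a unique solution $(u_h,p_h)\in V_{h,D}\times Q_h$ of~\eqref{StokesDisc}, together with the a~priori bound
\begin{align*}
  \|u_h\|_{1,h}+\|p_h\|_{0}\lesssim \|g\|_{0},
\end{align*}
the hidden constant depending only on $\mu$, $\beta$, $\alpha$ and the boundedness constants of $a_h,b_h$, hence not on the mesh-size. Since $V_{h,D}$ is finite-dimensional one may alternatively argue existence from uniqueness: if $g=0$ then testing the first equation with $u_h$ and the second with $p_h$, adding, gives $\|\eps_h(u_h)\|_0=0$, so $u_h=0$ by Theorem~\ref{t:korn}, and then $b_h(v_h,p_h)=0$ for all $v_h$ forces $p_h=0$ by the inf-sup condition; the square linear system is thus invertible.

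There is essentially no genuine obstacle here: the corollary is a bookkeeping exercise once Theorems~\ref{t:korn} and~\ref{t:infsup} are available, since those two results are exactly the hypotheses of the abstract mixed theory. The only point requiring a word of care is the mild compatibility between the two norms — Theorem~\ref{t:infsup} controls $\|\na_h v_h\|_0$ while Theorem~\ref{t:korn} and the a~priori estimate are phrased in $\|\cdot\|_{1,h}$ — but this is reconciled instantly by $\|\na_h v_h\|_0\le\|v_h\|_{1,h}$ in one direction and, on $Z_h$, by the discrete Korn and discrete Poincar\'e inequalities (as already combined in the proof of Theorem~\ref{t:korn}) in the other. Accordingly the write-up is short: state boundedness and coercivity of $a_h$, quote the inf-sup inequality extracted from Step~1 of the proof of Theorem~\ref{t:infsup}, and cite \cite{BoffiBrezziFortin2013} for the conclusion.
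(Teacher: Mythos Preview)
Your proposal is correct and follows exactly the route the paper takes: the paper gives no separate proof of the corollary but simply states that it follows from Theorem~\ref{t:korn}, Theorem~\ref{t:infsup}, and the standard mixed FEM theory of \cite{BoffiBrezziFortin2013}. Your write-up merely unpacks this citation, including the observation that the classical inf-sup inequality for $b_h$ is obtained in Step~1 of the proof of Theorem~\ref{t:infsup}.
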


Recently, a new approach in the error analysis of non-conforming FEMs 
was introduced \cite{Gudi2010}. This approach employs techniques from 
the a~posteriori analysis to conclude a~priori results. 
This leads to a~priori error estimates that are independent of the regularity 
of the exact solution and that hold on arbitrary coarse meshes.
This approach was generalized by~\cite{HuMaShi2014} to the case of 
non-constant stresses.
The stability results of Theorem~\ref{t:korn} and~\ref{t:infsup} and the
abstract a~posteriori framework of~\cite{Carstensen05, CarstensenHu2007}
are the key ingredients in the following error estimate. 
The right-hand side of this error estimate includes oscillations of the 
right-hand side $g$, which are defined by 
\begin{align*}
  \operatorname{osc}(g,\tri):=\|g-\Pi_0 g\|_{L^2(\Omega)}, 
\end{align*}
where $\Pi_0$ denotes the $L^2$ projection to piecewise constant functions.
If $g$ is (piecewise) smooth, this term is of higher-order. 

\begin{theorem}[best-approximation error estimate]
\label{t:bestapprox}
Assume that Hypotheses (H1) and (H2) hold.
Let $(u,p)\in [H^1_D(\Omega)]^3 \times L^2(\Omega)$ be the exact solution 
to problem~\eqref{e:StokesWeak} and $(u_h,p_h)\in V_{h,D}\times Q_h$ be the 
discrete solution of~\eqref{StokesDisc} for $V_{h,D}=V_{2,D}$ or 
$V_{h,D}=V_{\mathcal{F},D}$. Then it holds 
\begin{align*}
  &\|u-u_h\|_{1,h} + \|p-p_h\|_{L^2(\Omega)}\\
  &\qquad
  \lesssim \inf_{v_h\in V_{h,D}} \|u-v_h\|_{1,h}
     + \inf_{q_h\in Q_h} \|p-q_h\|_{L^2(\Omega)}
     +\|\varepsilon(u)-\Pi_0\varepsilon(u)\|_{L^2(\Omega)}
     +\operatorname{osc}(g,\tri).
\end{align*}
\end{theorem}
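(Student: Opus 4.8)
The plan is to follow the now-standard medius-analysis route for nonconforming mixed methods, combining a quasi-best-approximation (Céa-type) estimate with the stability results already in hand. First I would introduce the interpolation operator $J_h\colon [H^1_D(\Omega)]^3\to V_{h,D}$ componentwise: on the two conforming components use a Scott–Zhang quasi-interpolant, and on the nonconforming component use the operator defined by matching face integrals. Using Theorems~\ref{t:korn} and \ref{t:infsup}, the discrete problem~\eqref{StokesDisc} is well-posed (Corollary~\ref{c:uniqexdisc}), so by the second Strang-type lemma one has
\begin{align*}
 \|u-u_h\|_{1,h} + \|p-p_h\|_0
 \lesssim \inf_{v_h\in V_{h,D}}\|u-v_h\|_{1,h}
   + \inf_{q_h\in Q_h}\|p-q_h\|_0
   + \sup_{0\neq v_h\in V_{h,D}}\frac{\con_h(u,p;v_h)}{\|v_h\|_{1,h}},
\end{align*}
where $\con_h(u,p;v_h):=a_h(u,v_h)+b_h(v_h,p)-(g,v_h)_{L^2(\Omega)}$ is the consistency defect (for the conforming/pressure parts the Galerkin orthogonality is exact, so only the nonconforming third component contributes). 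Everything then reduces to bounding this consistency term by the right-hand side of the assertion.

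Next I would rewrite $\con_h(u,p;v_h)$ using the strong form~\eqref{e:StokesStrong}: integrating by parts elementwise, the volume terms cancel against $(g,v_h)$, leaving a sum of face terms of the form $\sum_{F\in\cF}\int_F (2\mu\eps(u)-pI)\nu_F\cdot[v_h]\,ds$, with the boundary contribution vanishing on $\Gamma_N$ by the Neumann condition and on $\Gamma_D$ because the nonconforming component has vanishing face integral there. Since only the third component of $v_h$ jumps, and its jump has zero integral over each interior face, one may subtract on each face the piecewise-constant projection of the relevant row of $2\mu\eps(u)-pI$ without changing the integral; this produces the oscillation-type quantities $\|\eps(u)-\Pi_0\eps(u)\|_0$ and $\inf_{q_h}\|p-q_h\|_0$. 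The remaining factor $h_F^{-1/2}\|[v_h]_F\|_{0,F}$ is then controlled, summed over $F$, by $\|v_h\|_{1,h}$ via Lemma~\ref{Lemmaefficiency} together with Remark~\ref{r:overlap} (and the usual finite-overlap property of the patches $\omega_z$) — exactly the mechanism already used in the proof of Theorem~\ref{t:korn}. The oscillation term $\osc(g,\tri)$ enters at the step where, to make the volume/face splitting exact, one replaces $g$ by $\Pi_0 g$ and pays $\|g-\Pi_0 g\|_0\,\|v_h\|_{0}$, bounded using $\|v_h\|_{0}\lesssim\|v_h\|_{1,h}$ after a Poincaré-type estimate (the discrete Poincaré inequality of \cite{Brenner2003} as in the proof of Theorem~\ref{t:korn}).

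The main obstacle is the consistency/face-term analysis: one must be careful that the face-jump terms are handled using only the structure actually available — the vanishing \emph{integral} (not pointwise value) of $[v_h]_F$ on interior and Dirichlet faces, and the nonconformity residing in a single component — and that the trace and inverse estimates are applied on reference patches so that the constants are mesh-size independent; this is where Lemma~\ref{Lemmaefficiency} does the decisive work of converting jumps back into $\eps_h$. A secondary technical point is the interpolation estimate $\|u-J_hu\|_{1,h}\lesssim\inf_{v_h}\|u-v_h\|_{1,h}$ for the mixed (conforming $\times$ conforming $\times$ nonconforming) space, which follows from componentwise stability of $J_h$ in the broken $H^1$ norm together with a standard argument; I expect this to be routine. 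Assembling these pieces yields the stated best-approximation estimate.
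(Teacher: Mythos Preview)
Your outline, despite invoking the phrase ``medius analysis'', is actually the classical Strang-lemma route and \emph{not} the medius analysis the paper carries out. The key step in your plan --- ``integrating by parts elementwise, the volume terms cancel against $(g,v_h)$, leaving a sum of face terms of the form $\sum_F\int_F(2\mu\varepsilon(u)-pI)\nu_F\cdot[v_h]\,ds$'' --- presupposes that the strong form $-2\mu\ddiv\varepsilon(u)+\nabla p=g$ holds in $L^2(T)$ and that $\varepsilon(u)\nu_F$ has an $L^2$ trace on each face. Neither is available under the minimal regularity $(u,p)\in[H^1_D(\Omega)]^3\times L^2(\Omega)$ assumed in the theorem; you would need something like $u\in H^2$ locally. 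Your explanation of where $\osc(g,\tri)$ comes from (``replacing $g$ by $\Pi_0 g$ to make the volume/face splitting exact'') is a symptom of this gap: if the strong form were valid the volume terms would cancel exactly and no such replacement would be needed.

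The paper avoids this regularity obstruction precisely by the mechanism of Gudi's medius analysis that you omit. Instead of integrating by parts on the continuous stress, it introduces an enriching operator $E_h:V_{h,D}\to[H^1_D(\Omega)]^3$ (identity on the two conforming components, averaging on the nonconforming one), tests the exact weak formulation with the \emph{conforming} function $E_h v_h$, and is left with terms involving $v_h-E_h v_h$ paired against the \emph{discrete} stress $\tau_h=2\mu\varepsilon_h(w_h)-r_hI_{3\times3}$. Integration by parts is then performed only on this piecewise-polynomial $\tau_h$, which is harmless. The resulting residual $\|h_\tri(g-\ddiv_h\tau_h)\|_0$ and face-jump terms $h_F^{1/2}\|[\tau_h]_F\nu_F\|_{0,F}$ are standard a~posteriori quantities, and their efficiency (proved via bubble functions) produces the best-approximation terms together with $\osc(g,\tri)$; the term $\|\varepsilon(u)-\Pi_0\varepsilon(u)\|_0$ arises because $[v_h-E_hv_h]_F$ has zero mean on each face, allowing subtraction of $\Pi_0\tau_h$. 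Lemma~\ref{Lemmaefficiency} is not used here at all --- its role is confined to the Korn inequality. Your outline would become correct if you inserted $E_h$ and shifted the integration by parts to the discrete side.
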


\begin{proof}
The proof is in the spirit of~\cite{Gudi2010,BadiaCodinaGudiGuzman2014} 
and the generalization of~\cite{HuMaShi2014}. The outline of 
the proof is included for completeness.

Let $(w_h,r_h)\in V_{h,D}\times Q_h$ be arbitrary. The inf-sup condition 
of Theorem~\ref{t:infsup} guarantees the existence of 
$(v_h,q_h)\in V_{h,D}\times Q_h$ with $\ennorm{(v_h,q_h)}=1$ and
\begin{align*}
 \|u_h-w_h\|_{1,h} + \|p_h-r_h\|_{L^2(\Omega)}
 \lesssim \mathcal{B}(u_h-w_h,p_h-r_h;v_h,q_h).
\end{align*}
Let $E_h:V_{h,D}\to [H^1_D(\Omega)]^3$ be the operator that is the identity 
in the first two components and an averaging (enriching) operator that maps 
$\Snc$ to $\SCone$ in the third component, 
see \cite{Gudi2010} for details.
As $(u,p)$ is the exact solution and $(u_h,p_h)$ is the discrete solution,
this implies 
\begin{align*}
 &\mathcal{B}(u_h-w_h,p_h-r_h;v_h,q_h)\\
  &\qquad= (g,v_h-E_h v_h)_{L^2(\Omega)} 
     +\mathcal{B}(u-w_h,p-r_h;E_h v_h,q_h)
     -\mathcal{B}(w_h,r_h;v_h - E_h v_h, 0)
     .
\end{align*}
A Cauchy inequality and the stability of the enriching operator $E_h$ prove 
\begin{align*}
 \lvert\mathcal{B}(u-w_h,p-r_h;E_h v_h,q_h)\rvert
   \leq \|u-w_h\|_{1,h} + \|p-r_h\|_{L^2(\Omega)} .
\end{align*}
Let $\tau_h:=2\mu\varepsilon_h(w_h)-r_h I_{3\times 3}$ denote the stress-like 
variable for $w_h$ and $r_h$.
A piecewise integration by parts proves for the remaining 
terms
\begin{equation}\label{e:bestapprox1}
\begin{aligned}
 &\lvert (g,v_h-E_h v_h)_{L^2(\Omega)} 
     +\mathcal{B}(w_h,r_h;v_h - E_h v_h, 0)\rvert\\
  &\qquad\qquad 
  = (g-\ddiv_h\tau_h,v_h-E_h v_h)_{L^2(\Omega)} 
    + \sum_{F\in\mathcal{F}} \int_{F} [(v_h-E_h v_h)\cdot \tau_h\nu_F]_F\,ds .
\end{aligned}
\end{equation}
The first term on the right-hand side is estimated with the help of a Cauchy 
inequality and the approximation properties of $E_h$ \cite{Gudi2010}
\begin{align*}
 (g-\ddiv_h\tau_h,v_h-E_h v_h)_{L^2(\Omega)} 
 \lesssim \|h_\tri (g-\ddiv_h\tau_h)\|_{L^2(\Omega)}.
\end{align*}
This is a standard a~posteriori error estimator 
term \cite{Carstensen05, CarstensenHu2007} and the bubble function 
technique~\cite{Verfuerth2013} proves the efficiency 
\begin{align*}
 \|h_\tri (g-\ddiv_h\tau_h)\|_{L^2(\Omega)}
   \lesssim \|u-w_h\|_{1,h} + \|p-r_h\|_{L^2(\Omega)} 
      + \operatorname{osc}(g,\tri).
\end{align*}
Let $\langle \bullet\rangle_F:=(\bullet\vert_{T_+} +\bullet\vert_{T_-})/2$ 
denote the average along $F=T_+\cap T_-$.
An elementary calculation proves for any $\alpha$ and $\beta$ that 
$[\alpha\beta_F]=\langle \alpha\rangle_F [\beta]_F 
+[\alpha]_F\langle\beta\rangle_F$.
We employ this identity for the second term of the right-hand side 
of~\eqref{e:bestapprox1} and conclude 
\begin{align*}
 &\int_{F} [(v_h-E_h v_h)\cdot \tau_h\nu_F]_F\,ds\\
 &\qquad\qquad
   =\int_{F} [v_h-E_h v_h]_F\cdot \langle\tau_h\nu_F\rangle_F\,ds
      + \int_F \langle v_h-E_h v_h\rangle_F \cdot [\tau_h\nu_F]_F\,ds.
\end{align*}
The second term on the right-hand side is again estimated with a~posteriori 
techniques \cite{Carstensen05,CarstensenHu2007,Verfuerth2013} which results in 
\begin{align*}
 \sum_{F\in\mathcal{F}} \int_F \langle v_h-E_h v_h\rangle_F \cdot [\tau_h\nu_F]_F\,ds
   &\lesssim \bigg(\sum_{F\in\mathcal{F}} h_F 
         \|[\tau_h]_F\nu_F\|_{L^2(F)}^2 \bigg)^{1/2}\\
   &\lesssim \|u-w_h\|_{1,h} + \|p-r_h\|_{L^2(\Omega)} 
      + \operatorname{osc}(g,\tri).
\end{align*}
Since $[v_h-E_h v_h]_F$ is affine on $F=T_+\cap T_-$ and vanishes at the midpoint 
of $F$, we conclude for the first term as in~\cite{HuMaShi2014} that 
\begin{align*}
 &\int_{F} [v_h-E_h v_h]_F\cdot \langle\tau_h\nu_F\rangle_F\,ds
   = \int_F [v_h-E_h v_h]_F\cdot \langle (1-\Pi_0)\tau_h\nu_F\rangle_F\,ds\\
 &\qquad\qquad \qquad\qquad 
  = \frac{1}{2}\int_F [v_h-E_h v_h]_F\cdot ( (1-\Pi_0)\tau_h\vert_{T_+} + 
      (1-\Pi_0)\tau_h\vert_{T_-})\nu_F\,ds.
\end{align*}
Note that $\tau_h=2\mu\varepsilon_h(w_h)-r_h I_{3\times 3}$ and 
$r_h$ is piecewise constant.
Therefore, trace inequalities \cite{BrennerScott08} and an inverse inequality 
imply that this is bounded by 
\begin{align*}
 &\frac{1}{2}\int_F [v_h-E_h v_h]_F\cdot ( (1-\Pi_0)\tau_h\vert_{T_+} + 
      (1-\Pi_0)\tau_h\vert_{T_-})\nu_F\,ds\\
 &\qquad\qquad      
  \lesssim \|(1-\Pi_0) \tau_h\|_{L^2(T_+\cup T_-)}\\
  &\qquad\qquad      
  \leq \|(1-\Pi_0)\varepsilon_h(u-w_h)\|_{L^2(T_+\cup T_-)}
    + \|(1-\Pi_0)\varepsilon (u)\|_{L^2(T_+\cup T_-)}.
\end{align*}
Since $\|(1-\Pi_0)\varepsilon_h(u-w_h)\|_{L^2(T_+\cup T_-)}
\leq \|\varepsilon_h(u-w_h)\|_{L^2(T_+\cup T_-)}$, the combination 
of the foregoing inequalities and the finite overlap of the patches 
conclude the proof.
\end{proof}

\section{Counterexamples for $P_1$-$P_0$ discretization}
\label{s:counterexamples}

The following two counterexamples prove that the inf-sup condition
for the ansatz space 
$\SCone\times \SCone\times \Snc$ 
cannot hold in general, as well as that 
a discrete Korn inequality for the ansatz 
space $\SCone\times \Snc\times \Snc$
is not satisfied in general.

\subsection{Instability of 
$\SCone\times \SCone\times \Snc$}
\label{ss:noinfsup}

The following counterexample proves that the inf-sup condition
\begin{align}\label{e:infsupcounterexample}
  \|p_h\|_0 \lesssim \sup_{v_h\in V_{h,D}\setminus\{0\}}
     \frac{b_h(v_h,p_h)}{\|\nabla_h v_h\|_0}
   \qquad\text{for all }p_h\in Q_h
\end{align}
is not 
fulfilled for functions in 
$\SCone\times \SCone\times \Snc$. 

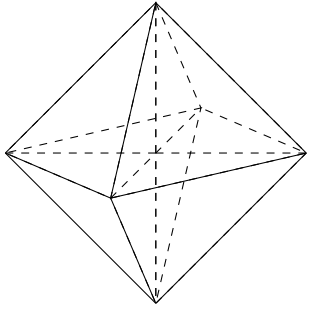
\begin{figure}
\begin{center}
 \begin{tikzpicture}[scale=2]
   \draw[dashed] (-1,0)--(0,0)--(0,-1);
   \draw[dashed] (0,0)--(0,1)--(-1,0);
   \draw[dashed] (1,0)--(0,0)--(0,-1)--cycle;
   \draw[dashed] (0,0)--(0,1)--(1,0);
   \draw[dashed] (-0.3,-0.3)--(0.3,0.3)--(-1,0)--(-0.3,-0.3)--(1,0)--(0.3,0.3);
   \draw[dashed] (0,1)--(0.3,0.3)--(0,-1)--(-0.3,-0.3)--cycle;
   \draw (0,-1)--(-1,0);
   \draw (0,1)--(-1,0);
   \draw (1,0)--(0,-1);
   \draw (0,1)--(1,0);
   \draw (-1,0)--(-0.3,-0.3)--(1,0);
   \draw (0,-1)--(-0.3,-0.3)--cycle;
   \draw (0,1)--(-0.3,-0.3);
 \end{tikzpicture}
\end{center}
\caption{\label{f:triangulationcounterex}Triangulation 
   of Subsections~\ref{ss:noinfsup} and~\ref{ss:noKorn42NC}.}
\end{figure}

Consider the node $z:=(0,0,0)$ with nodal patch
\begin{equation}\label{e:triangulationcounterex}
\begin{aligned}
 \cT&:=\{T_{jk\ell}\mid (j,k,\ell)\in\{1,2\}^3\} \\
 \text{with}\quad
 T_{jk\ell}&:=\mathrm{conv}\{z,(-1)^j e_1,(-1)^k e_2,(-1)^\ell e_3\}
\end{aligned}
\end{equation}
with the unit vectors $e_1$, $e_2$ and $e_3$; see Figure~\ref{f:triangulationcounterex}
for an illustration.
Let $\Omega:=\bigcup\tri$ be the corresponding domain with pure Dirichlet 
boundary $\Gamma_D=\partial\Omega$.
Define the function $q\in P_0(\tri)$ by 
\begin{align*}
 \tilde{q}\vert_{T_{111}}:=\tilde{q}\vert_{T_{112}}:=1,
 \qquad 
 \tilde{q}\vert_{T_{121}}:=\tilde{q}\vert_{T_{122}}:=0,\\
 \tilde{q}\vert_{T_{211}}:=\tilde{q}\vert_{T_{212}}:=0,
 \qquad 
 \tilde{q}\vert_{T_{221}}:=\tilde{q}\vert_{T_{222}}:=1
\end{align*}
and $q=\tilde{q}-\int_\Omega \tilde{q}\,dx$.
The normal vectors to the following intersections read:
\begin{align*}
 \text{for }E_1:=(T_{111}\cup T_{112})\cap (T_{121}\cup T_{122})
 \text{ define }\nu_1=(0,1,0),\\
 \text{for }E_2:=(T_{111}\cup T_{112})\cap (T_{211}\cup T_{212})
 \text{ define }\nu_2=(1,0,0),\\
 \text{for }E_3:=(T_{221}\cup T_{222})\cap (T_{121}\cup T_{122})
 \text{ define }\nu_2=(1,0,0),\\
 \text{for }E_4:=(T_{221}\cup T_{222})\cap (T_{211}\cup T_{212})
 \text{ define }\nu_2=(0,1,0).
\end{align*}
Let $v_h=(v_1,v_2,v_3)\in \SCone\times \SCone\times \Snc$. 
Since $v_1$ and $v_2$ have only one degree of freedom, it follows
$v_1(\mathrm{mid}(F))=v_1(\mathrm{mid}(F'))$ for all faces $F,F'$. 
An integration by parts then proves
\begin{align*}
 \int_{\Omega_z} q \frac{\pa v_1}{\pa x_1}\dx
  = \int_{E_2} [q]_{E_2} v_1 \ds 
    + \int_{E_3} [q]_{E_3} v_1\ds
  = 2 v_1(\mathrm{mid}(F)) ([q]_{E_2}+[q]_{E_3})=0
\end{align*}
and similarly 
\begin{align*}
 \int_{\Omega_z} q \frac{\pa v_2}{\pa x_2}\dx=0.
\end{align*}
Since the third components of all of the normal vectors for faces, where 
$q$ jumps, vanish, a further 
integration by parts leads to 
\begin{align*}
 \int_{\Omega_z} q \frac{\pa v_3}{\pa x_3}\dx=0.
\end{align*}
The sum of the last three equalities yields
\begin{align*}
 \int_{\Omega_z} q \div v_h\dx=0.
\end{align*}
Since $v_{h}\in V_{h,D}$ is arbitrary, this proves that the 
inf-sup condition~\eqref{e:infsupcounterexample} cannot hold, and, hence, 
a discretisation with the space 
$\SCone\times \SCone\times \Snc$ is not stable.

\subsection{Instability of 
$\SCone\times \Snc\times \Snc$}
\label{ss:noKorn42NC}

The following counterexamples prove that there are functions
in $\SCone\times \Snc\times \Snc$ such that 
$\varepsilon_h(\bullet)$
vanishes, but which are not global rigid body motions. This proves that a 
Korn inequality cannot hold on $\SCone\times \Snc\times \Snc$.
The first part illustrates, how a missing Korn inequality for the 
discretisation $\Snc\times\Snc$ in the two dimensional situation generalizes 
to the three dimensional case, while the second part proves 
that there exists arbitrary fine meshes, such that a counterexample can be 
constructed.

For the first counterexample, consider first the two dimensional square 
$\widetilde{\Omega}=(-1,1)^2$ with the two dimensional triangulation 
$\tri_{2D}=\{\widetilde{T}_1,\widetilde{T}_2,\widetilde{T}_3,\widetilde{T}_4\}$ 
with triangles $\widetilde{T}_j$ as in 
Figure~\ref{f:2dtriangcounter}.
\begin{figure}
\subfloat[\label{f:2dtriangcounter}2D triangulation]{
 \begin{tikzpicture}
   \draw (-2,-2)--(2,2)--(-2,2)--(2,-2);
   \draw (-2,2)--(-2,-2)--(2,-2)--(2,2);
   \node () at (0,1){$\widetilde{T}_1$};
   \node () at (-1,0){$\widetilde{T}_2$};
   \node () at (0,-1){$\widetilde{T}_3$};
   \node () at (1,0){$\widetilde{T}_4$};
   
    \phantom{\draw[dashed,cyan, thick] (0,-2.5);}
 \end{tikzpicture}
 }
\subfloat[\label{f:2dcounter}2D counterexample]{
 \begin{tikzpicture}[scale=2]
    \coordinate (M) at (0,0);
 \coordinate (A) at (1,-1);
 \coordinate (B) at (1,1);
 \coordinate (C) at (-1,1);
 \coordinate (D) at (-1,-1);

 \coordinate (mA) at ($ (M)!.5!(A) $);
 \coordinate (mB) at ($ (M)!.5!(B) $);
 \coordinate (mC) at ($ (M)!.5!(C) $);
 \coordinate (mD) at ($ (M)!.5!(D) $);

 \draw (A)--(C) (B)--(D);
 \draw (A)--(B)--(C)--(D)--cycle;
 
 \draw[fill]  (mA) circle(.04);
 \draw[fill]  (mB) circle(.04);
 \draw[fill]  (mC) circle(.04);
 \draw[fill]  (mD) circle(.04);

 \draw[->,ultra thick] (mA)--($ (M)!.75!(A) $);
 \draw[->,ultra thick] (mB)--($ (M)!.25!(B) $);
 \draw[->,ultra thick] (mC)--($ (M)!.75!(C) $);
 \draw[->,ultra thick] (mD)--($ (M)!.25!(D) $);

 \coordinate (T1a) at ($(A)+(.25,0)$);
 \coordinate (T1b) at ($(B)+(-.25,0)$);
 \coordinate (T1m) at ($(M)+(0,-.25)$);

 \coordinate (T2b) at ($(B)+(0,-.25)$);
 \coordinate (T2c) at ($(C)+(0,.25)$);
 \coordinate (T2m) at ($(M)+(-.25,0)$);
 
 \coordinate (T3c) at ($(C)+(-.25,0)$);
 \coordinate (T3d) at ($(D)+(.25,0)$);
 \coordinate (T3m) at ($(M)+(0,.25)$);
 
 \coordinate (T4d) at ($(D)+(0,.25)$);
 \coordinate (T4a) at ($(A)+(0,-.25)$);
 \coordinate (T4m) at ($(M)+(.25,0)$);
 
 \draw[dashed, thick] (T1a)--(T1b)--(T1m)--cycle;
 \draw[dashed,thick] (T2b)--(T2c)--(T2m)--cycle;
 \draw[dashed,thick] (T3c)--(T3d)--(T3m)--cycle;
 \draw[dashed,thick] (T4d)--(T4a)--(T4m)--cycle;
 \end{tikzpicture}
 }
 \subfloat[\label{f:3dcounterCRCR1}Generalization to 3D]{
 \begin{tikzpicture}[scale=0.9]
 \coordinate (M) at (0,0);
 \coordinate (A) at (1,-1);
 \coordinate (B) at (3,1);
 \coordinate (C) at (-1,1);
 \coordinate (D) at (-3,-1);
 \coordinate (P) at (-.2,5);
 
 \coordinate (dA) at ($ (M)!.33!(A) $);
 \coordinate (dB) at ($ (M)!.27!(B) $);
 \coordinate (dC) at ($ (M)!.33!(C) $);
 \coordinate (dD) at ($ (M)!.27!(D) $);

 \draw[thick] (D)--(A)--(B);
 \draw[dashed,thick] (B)--(C)--(D);
 \draw (B)--(D)  (A)--(C);
 
 \draw (A)--(P)--(B)  (D)--(P);
 \draw[dashed] (C)--(P);
 \draw[dashed] (M)--(P);
 
 \draw[fill]  (dA) circle(.06);
 \draw[fill]  (dB) circle(.06);
 \draw[fill]  (dC) circle(.06);
 \draw[fill]  (dD) circle(.06);
 
 \draw[->,ultra thick] (dA)--($ (M)!.66!(A) $);
 \draw[->,ultra thick] (dB)--($ (M)!.11!(B) $);
 \draw[->,ultra thick] (dC)--($ (M)!.66!(C) $);
 \draw[->,ultra thick] (dD)--($ (M)!.11!(D) $);
 
 \draw[fill,gray]  ($ (M)!.33!(A)!.25!(P) $) circle(.1);
 \draw[fill,gray]  ($ (M)!.44!(B)!.25!(P) $) circle(.1);
 \draw[fill,gray]  ($ (M)!.33!(C)!.25!(P) $) circle(.1);
 \draw[fill,gray]  ($ (M)!.44!(D)!.25!(P) $) circle(.1);

 \coordinate (T1a) at ($(A)+(.15,0)$);
 \coordinate (T1b) at ($(B)+(-.2,.05)$);
 \coordinate (T1m) at ($(M)+(-.05,-.1)$);
 \coordinate (T1p) at ($(P)+(-.05,-.1)$);
 
 \draw[dashed,ultra thick] (T1a)--(T1b)--(T1m)--cycle;
 \draw[dashed,ultra thick] (T1a)--(T1p)--(T1b)--(T1p)--(T1m)--(T1p);
 
\end{tikzpicture}
}
\caption{\label{f:counterCRCR1}Generalization of 2D counterexample
 for the instability of $\SCone\times\Snc\times\Snc$.}
\end{figure}
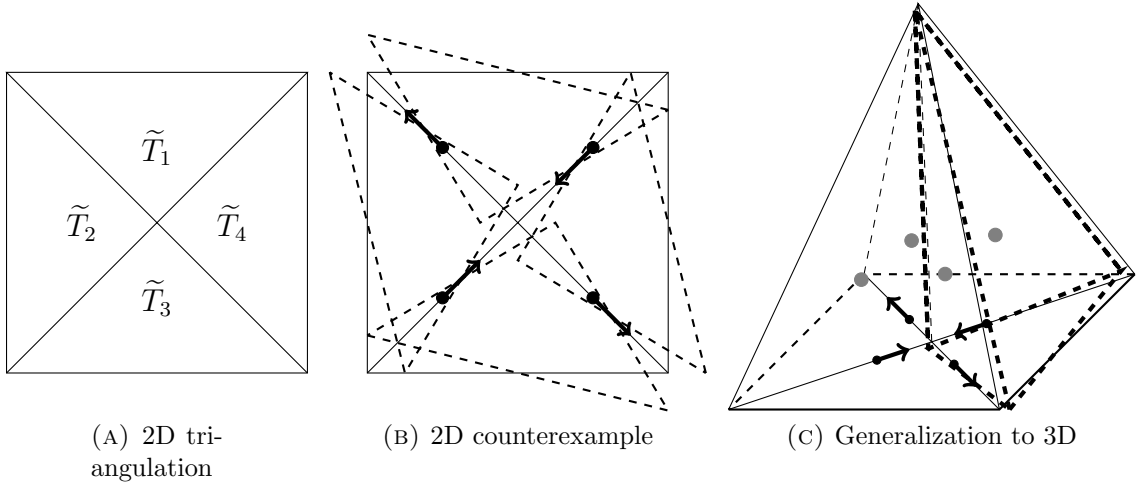
Then a piecewise rigid body motion $\mathrm{RM}_\mathrm{2D}$ 
that is continuous at the midpoints 
of the (2D) edges of the triangulation is depicted in Figure~\ref{f:2dcounter}.
This counterexample is also given in~\cite[Sect.~5]{FalkMorley1990}
and \cite{Arnold1993} to prove 
that there are 2D triangulations where Korn's inequality does not hold,
even if boundary conditions are imposed.
Consider now the triangulation $\tri:=\{T_1,T_2,T_3,T_4\}$ in 3D with 
$T_j:=\operatorname{conv}\{(1,0,0),\widetilde{T}_j\}$, where 
$\widetilde{T}_j$ from above is considered as a set in the plain 
$\{0\}\times \R^2$.
Shifting the continuity points of $\mathrm{RM}_\mathrm{2D}$, 
such that the function is continuous at 
$(-1/3,-1/3),(-1/3,1/3),(1/3,-1/3),(1/3,1/3)$, a 
piecewise rigid body motion with respect to $\tri$ is given by 
$\mathrm{RM}_\mathrm{3D}(x):=(0,\mathrm{RM}_\mathrm{2D}(x_2,x_3))$.
Since it is continuous at the points 
$(0,-1/3,-1/3),(0,-1/3,1/3),(0,1/3,-1/3),(0,1/3,1/3)$
and constant in $x$-direction, it is also continuous at the midpoints 
of the interior faces. This proves that a Korn inequality cannot 
hold for the space $\SCone\times \Snc\times \Snc$.

For the second part, let the triangulation $\widehat{\tri}$ be 
given by $\widehat{\tri}:=\{\widehat{T}_1,\widehat{T}_2,\widehat{T}_3,
\widehat{T}_4\}$ with the tetrahedra
\begin{align*}
 \widehat{T}_1&:=\operatorname{conv}\{(0,0,0),(1,0,0),(0,1,0),(0,0,1)\},\\
 \widehat{T}_2&:=\operatorname{conv}\{(0,0,0),(1,0,0),(0,-1,0),(0,0,1)\},\\
 \widehat{T}_3&:=\operatorname{conv}\{(0,0,0),(1,0,0),(0,-1,0),(0,0,-1)\},\\
 \widehat{T}_4&:=\operatorname{conv}\{(0,0,0),(1,0,0),(0,1,0),(0,0,-1)\};\\
\end{align*}
see Figure~\ref{f:triang_counterexample3} for an illustration.
Let $a\in\R$ be arbitrary. Define a piecewise rigid body motion $\varphi$ by 
\begin{align*}
 \varphi_{\widehat{T}_1}(x)&:= (0,a-3a x_3,-a+3a x_2),\\
 \varphi_{\widehat{T}_4}(x)&:= (0,-a+3a x_3,-a-3a x_2),\\
 \varphi_{\widehat{T}_4}(x)&:= (0,-a-3a x_3,a+3a x_2),\\
 \varphi_{\widehat{T}_4}(x)&:= (0,a+3a x_3,a-3a x_2),\\
\end{align*}
This function is continuous at the interior face's midpoints and vanishes 
at the midpoints of the boundary faces. Therefore it can be extended by zero 
to the rectangle $(0,1)\times(-1,1)\times (-1,1)$. As those functions can be 
easily glued together, this proves that even for arbitrary fine mesh-sizes, 
there exists piecewise rigid body motions in $\SCone\times \Snc\times \Snc$.

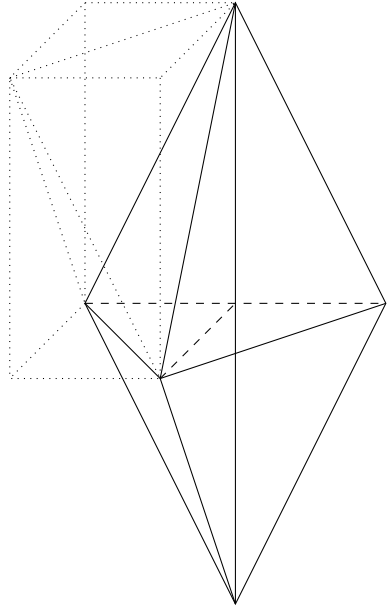
\begin{figure}
 \begin{center}
  \begin{tikzpicture}[scale=2]
    \draw (0,-2)--(0,2);
    \draw (0,2)--(-0.5,-0.5)--(0,-2);
    \draw[dashed] (-1,0)--(1,0);
    \draw (0,2)--(-1,0)--(0,-2)--(1,0)--cycle;
    \draw (-1,0)--(-0.5,-0.5)--(1,0);
    \draw[dashed] (-0.5,-0.5)--(0,0);
    \draw[dotted] (-1,0)--(-1.5,-0.5)--(-0.5,-0.5)--(-1.5,1.5)--(-1.5,-0.5);
    \draw[dotted] (-1.5,1.5)--(-1,0);
    \draw[dotted] (-1.5,1.5)--(-0.5,1.5)--(0,2)--(-1,2)--(-1.5,1.5)--(0,2);
    \draw[dotted] (-1,2)--(-1,0);
    \draw[dotted] (-0.5,1.5)--(-0.5,-0.5);
  \end{tikzpicture}
 \end{center}
 \caption{\label{f:triang_counterexample3}Triangulation $\widehat{\tri}$ from 
  the second counterexample in Section~\ref{ss:noKorn42NC}. The dotted lines 
  indicate a possible extension to the extension of this triangulation 
  to the rectangle $(0,1)\times (-1,1)\times (-1,1)$.}
\end{figure}

\section{Numerical experiments}
\label{s:numerics}

This section compares the performance of the two suggested discretisations from 
Section~\ref{s:FEM} and the conforming Bernardi-Raugel FEM in 
numerical experiments. The Bernardi-Raugel FEM~\cite{BernardiRaugel1985} is a conforming 
FEM that approximates the velocity in the space
\begin{align*}
 V_\mathrm{BR}:=\left\{v_h\in (H^1_D(\Omega))^3\left|
  \begin{array}{l}
    \exists v_\mathrm{C}\in (S^{\mathrm{C},1})^3\text{ and }
    \forall F\in \cF\, \exists \alpha_F\in \R
    \text{ such that }\\
    v_h= v_\mathrm{C}+\sum_{F\in\cF\setminus\cF(\Gamma_D)}\alpha_F\varphi_F\nu_F
  \end{array}
    \right\}\right. ,
\end{align*}
where $\nu_F$ denotes the normal for a face $F$ and 
$\varphi_F$ denotes the face bubble defined in Section~\ref{s:FEM}.
The pressure is approximated in $Q_h$.
The errors of the different methods are compared in the following 
subsections, while the computational effort of the three different methods 
is illustrated in Tables~\ref{tab:ndof}--\ref{tab:nnz} in terms of the 
number of non-zero entries of the system matrices and the number of degrees of freedom.
The number of degrees of freedom are lower for the Bernardi-Raugel 
method compared to the two proposed methods. 
However, since the support of the face bubble functions in 
$V_{\mathcal{F},D}$ consists of only two tetrahedra, the number of non-zero entries 
of the system matrices of the Bernardi-Raugel FEM and the proposed FEM 
with $V_{h,D}=V_{\mathcal{F},D}$ shows only slight differences.

\begin{table}
\begin{tabular}{l|ccc|ccc}
 level of & \multicolumn{3}{c|}{cube} & \multicolumn{3}{c}{L-shaped domain}\\
 refinement & KS bubbles & KS $P_2$ & BR & KS bubbles & KS $P_2$ & BR\\
 \hline
  1 & 1.7e+02 & 1.3e+02 & 1.0e+02 & 5.1e+02 & 3.9e+02 & 3.2e+02\\
  2 & 1.5e+03 & 1.2e+03 & 9.7e+02 & 4.5e+03 & 3.6e+03 & 2.9e+03\\
  3 & 1.2e+04 & 1.0e+04 & 8.3e+03 & 3.8e+04 & 3.1e+04 & 2.5e+04\\
  4 & 1.0e+05 & 8.8e+04 & 6.9e+04 & 3.1e+05 & 2.6e+05 & 2.0e+05\\
  5 & 8.6e+05 & 7.2e+05 & 5.6e+05 \\
\end{tabular}
\caption{\label{tab:ndof}Number of degrees of freedom in the 
 numerical experiments in Sections~\ref{ss:numericsCube1}--\ref{ss:numericsLG3}.}
\end{table}

\begin{table}
\begin{tabular}{l|ccc|ccc}
 level of & \multicolumn{3}{c|}{cube} & \multicolumn{3}{c}{L-shaped domain}\\
 refinement & KS bubbles & KS $P_2$ & BR & KS bubbles & KS $P_2$ & BR\\
 \hline
 1 & 5.4e+03 & 6.2e+03 & 5.3e+03 & 1.5e+04 & 1.7e+04 & 1.5e+04 \\
  2 & 4.0e+04 & 4.4e+04 & 3.9e+04 & 1.1e+05 & 1.3e+05 & 1.1e+05 \\
  3 & 3.1e+05 & 3.4e+05 & 3.0e+05 & 9.3e+05 & 1.0e+06 & 8.9e+05 \\
  4 & 2.4e+06 & 2.7e+06 & 2.3e+06 & 7.4e+06 & 8.0e+06 & 7.0e+06 \\
  5 & 1.9e+07 & 2.1e+07 & 1.8e+07 \\
%   \\ \hline
%   1 & 5416 & 6205 & 5326 & 15666 & 17489 & 15106 \\
%   2 & 40302 & 44976 & 39325 & 118934 & 130417 & 114701 \\
%   3 & 312776 & 344250 & 302310 & 933522 & 1014379 & 894539 \\
%   4 & 2476048 & 2705778 & 2372332 & 7437618 & 8044467 & 7072805 \\
%   5 & 19781489 & 21537675 & 18815202
\end{tabular}
\caption{\label{tab:nnz}Number of non-zero entries in the system matrix in the 
 numerical experiments in Sections~\ref{ss:numericsCube1}--\ref{ss:numericsLG3}.}
\end{table}

\subsection{Smooth solution on the cube, I}
\label{ss:numericsCube1}

This subsection considers the smooth solution 
\begin{align*}
 u(x) &= \begin{pmatrix}
          \pi\cos(\pi x_2)\sin(\pi x_1)^2 \sin(\pi x_2)\sin(\pi x_3)\\
          -\pi\cos(\pi x_1) \sin(\pi x_2)^2 \sin(\pi x_1)\sin(\pi x_3)\\
          0
        \end{pmatrix},\\
  p(x) &= 0
\end{align*}
on the Cube $\Omega=(0,1)^3$ with Neumann boundary $\Gamma_N=(0,1)^2\times\{0\}$
and Dirichlet boundary $\Gamma_D=\partial\Omega\setminus\Gamma_N$. 
The solutions to~\eqref{StokesDisc} with 
$V_{h,D}=V_{2,D}$ and $V_{h,D}=V_{\mathcal{F},D}$ and the solution for the 
Bernardi-Raugel FEM for the right-hand 
side $f$ and $g$ given by the exact solution
are computed on a 
sequence of red-refined triangulations. The initial triangulation is 
depicted in Figure~\ref{f:inittriangcube}.
\begin{figure}
 \begin{center}
   \includegraphics[width=0.3\textwidth]{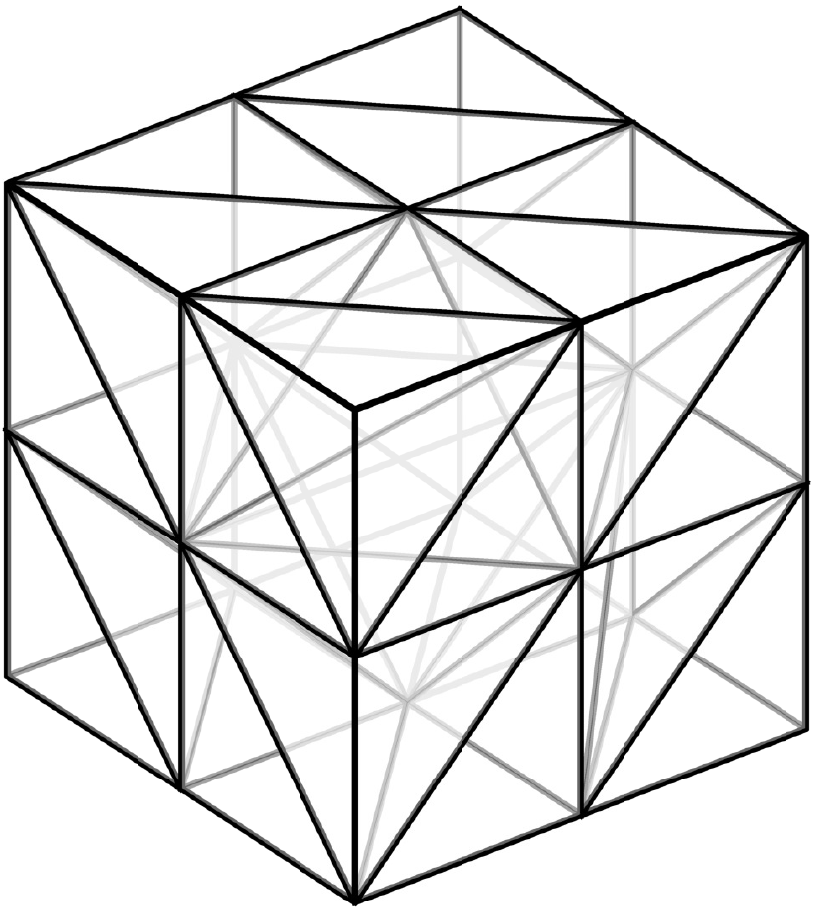}
   \hspace{2cm}
   \includegraphics[width=0.3\textwidth]{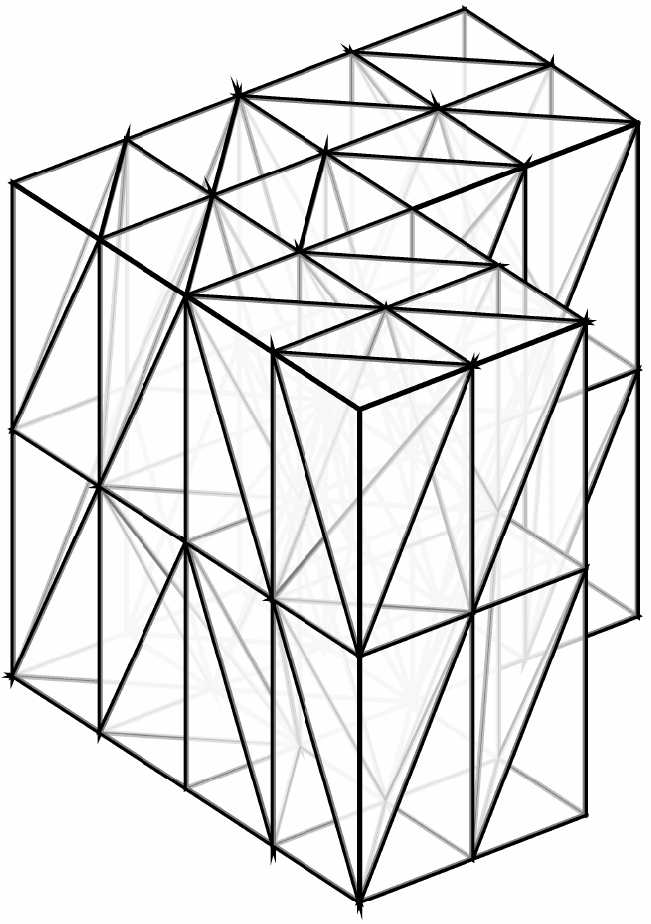}
 \end{center}
\caption{\label{f:inittriangcube}Initial triangulations of the cube and the 
tensor product L-shaped domain.}
\end{figure}
The $H^1$ errors and the $L^2$ errors of the velocity $u$ are depicted 
in the convergence history plot in Figure~\ref{f:cube1}. 
The $H^1$ errors show convergence rates of $\mathcal{O}(h)$ for all 
methods, while the convergence rates of the $L^2$ errors of all methods are near
$\mathcal{O}(h^2)$ with a slightly larger convergence rate for $V_{2,D}$.
\begin{figure}
 \begin{center}
   \includegraphics[width=0.95\textwidth]{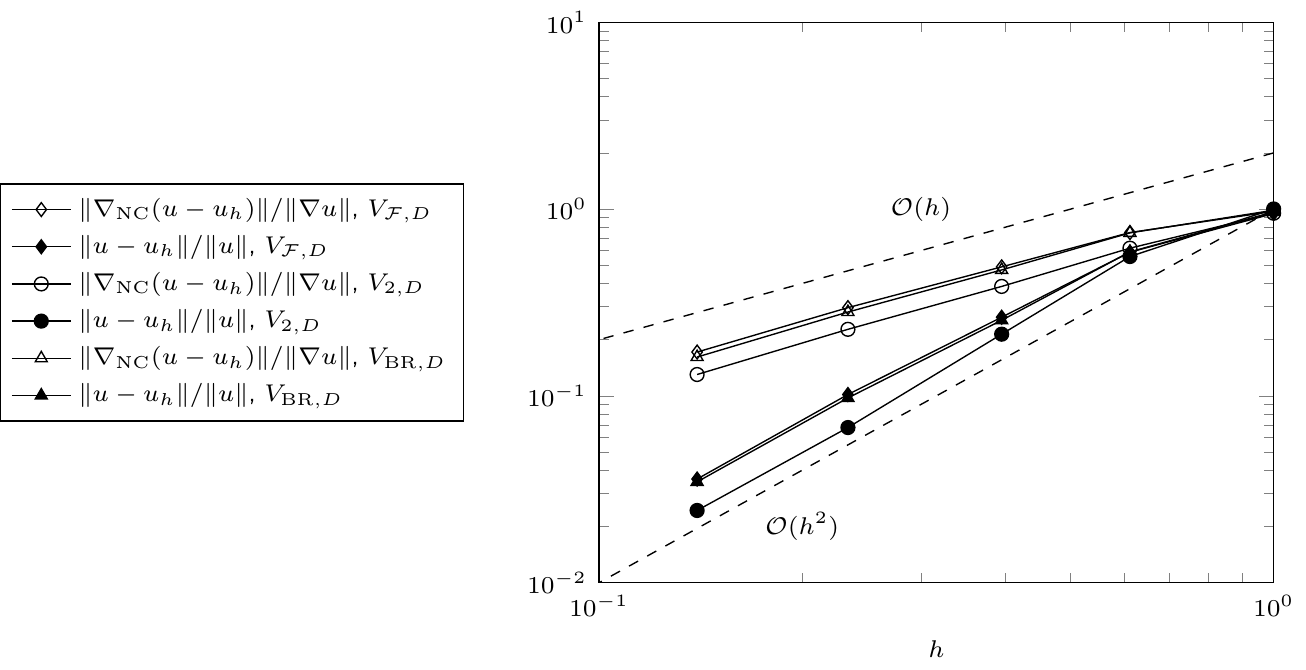}
 \end{center}
\caption{\label{f:cube1}Convergence history plot for the example from 
Subsection~\ref{ss:numericsCube1}}
\end{figure}

\subsection{Smooth solution on the cube, II}
\label{ss:numericsCube2}

This subsection considers the smooth exact solution
\begin{align*}
 u&(x) = \begin{pmatrix}
          10x_1 x_2^4+10x_1 x_3^4-4 x_1^5\\
          10 x_2 x_1^4+10 x_2 x_3^4 -4x_2^5\\
          10 x_3 x_1^4+10 x_3 x_2^4 - 4x_3^5
        \end{pmatrix},\\
  p&(x) = -60 x_1^2 x_2^2-60x_1^2 x_3^2 -60 x_2^2 x_3^2 +20 x_1^4 + 20x_2^4+20x_3^4
\end{align*}
on the cube $\Omega=(-1,1)^3$ with Neumann boundary $\Gamma_N=(0,1)^2\times\{-1\}$
and Dirichlet boundary $\Gamma_D=\partial\Omega\setminus\Gamma_N$. 
As in subsection~\ref{ss:numericsCube1}, 
the solutions to~\eqref{StokesDisc} with 
$V_{h,D}=V_{2,D}$ and $V_{h,D}=V_{\mathcal{F},D}$ and the solution 
of the Bernardi-Raugel FEM for the right-hand 
side $f$ and $g$ given by the exact solution
are computed on a 
sequence of red-refined triangulations. The initial triangulation is 
depicted in Figure~\ref{f:inittriangcube}.
The $H^1$ errors and the $L^2$ errors of the velocity $u$ are depicted 
in the convergence history plot in Figure~\ref{f:cube2}. 
The $H^1$ errors show convergence rates of $\mathcal{O}(h)$ for all 
methods, while the convergence rates of the $L^2$ errors of both nonconforming 
methods are slightly worse than $\mathcal{O}(h^2)$. The convergence rate 
of the $L^2$ error for the Bernardi-Raugel FEM seems to be larger than that of 
the two nonconforming FEMs.
\begin{figure}
 \begin{center}
   \includegraphics[width=0.95\textwidth]{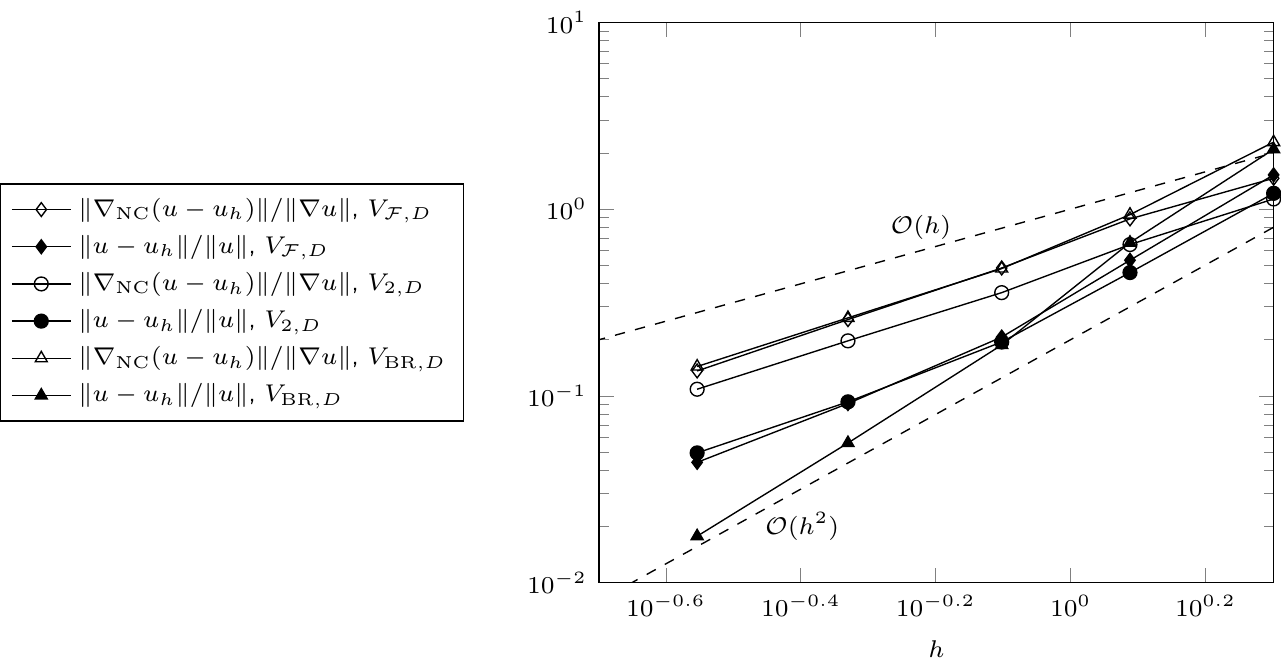}
 \end{center}
 \caption{\label{f:cube2}Convergence history plot for the example from 
Subsection~\ref{ss:numericsCube2}}
\end{figure}

\subsection{Smooth solution on the cube, III}
\label{ss:numericsCube3}

This subsection considers the smooth exact solution
\begin{align*}
 u(x) &= \begin{pmatrix}
          2x_2 x_3 (x_1^2-1)^2(x_2^2-1)(x_3^2-1)\\
          -x_1 x_3 (x_1^2-1)(x_2^2-1)^2(x_3^2-1)\\
          -x_1 x_2 (x_1^2-1)(x_2^2-1)(x_3^2-1)^2
        \end{pmatrix},\\
  p(x) &= x_1 x_2 x_3
\end{align*}
on the cube $\Omega=(-1,1)^3$ with Neumann boundary $\Gamma_N=(0,1)^2\times\{-1\}$
and Dirichlet boundary $\Gamma_D=\partial\Omega\setminus\Gamma_N$. 
As in subsections~\ref{ss:numericsCube1}--\ref{ss:numericsCube2}, 
the solutions to~\eqref{StokesDisc} with 
$V_{h,D}=V_{2,D}$ and $V_{h,D}=V_{\mathcal{F},D}$ and the solution of the 
Bernardi-Raugel FEM for the right-hand 
side $f$ and $g$ given by the exact solution
are computed on a 
sequence of red-refined triangulations. The initial triangulation is 
depicted in Figure~\ref{f:inittriangcube}.
The $H^1$ errors and the $L^2$ errors of the velocity $u$ are depicted 
in the convergence history plot in Figure~\ref{f:cube3}. 
The $H^1$ errors show convergence rates slightly worse than $\mathcal{O}(h)$ for all three 
methods. As the convergence rate still increases under the considered 
refinements, it is suggested that the asymptotic regime is not reached at 
this point.
The convergence rates of the $L^2$ errors of all three methods 
are $\mathcal{O}(h^2)$ for all methods. 

\begin{figure}
 \begin{center}
   \includegraphics[width=0.95\textwidth]{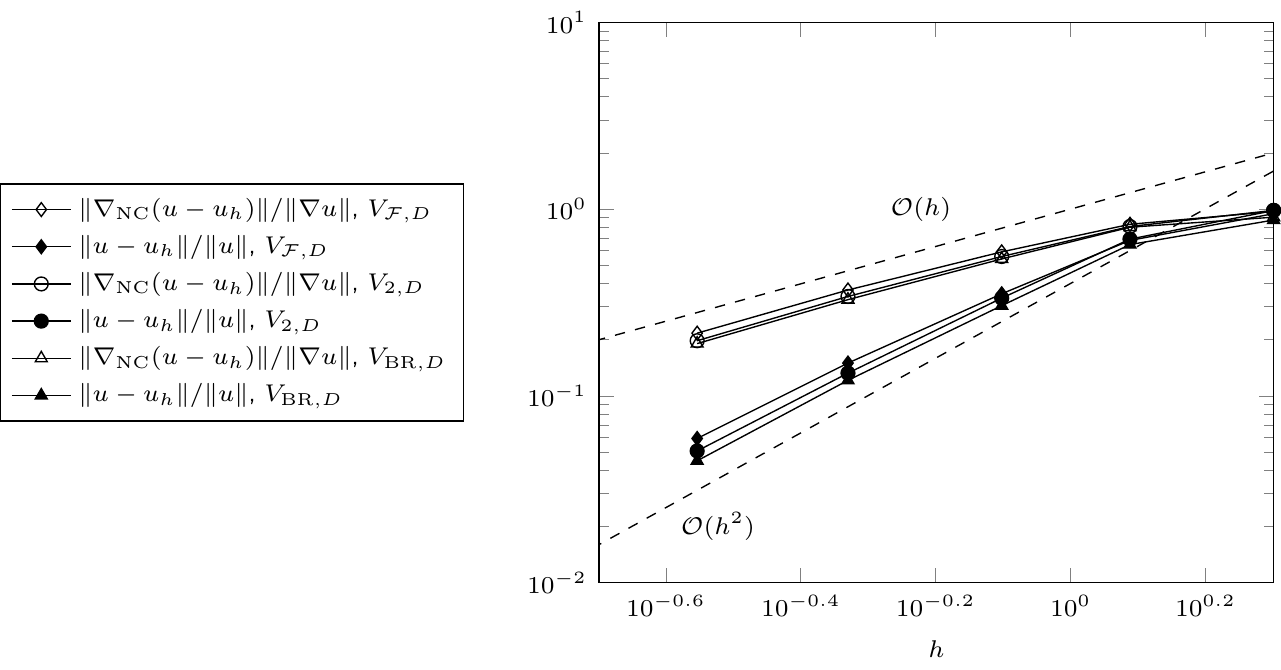}
 \end{center}
 \caption{\label{f:cube3}Convergence history plot for the example from 
Subsection~\ref{ss:numericsCube3}}
\end{figure}

\subsection{Singular solution on the 3D tensor product L-shaped domain}
\label{ss:numericsLG3}

\begin{figure}
 \begin{center}
   \includegraphics[width=0.95\textwidth]{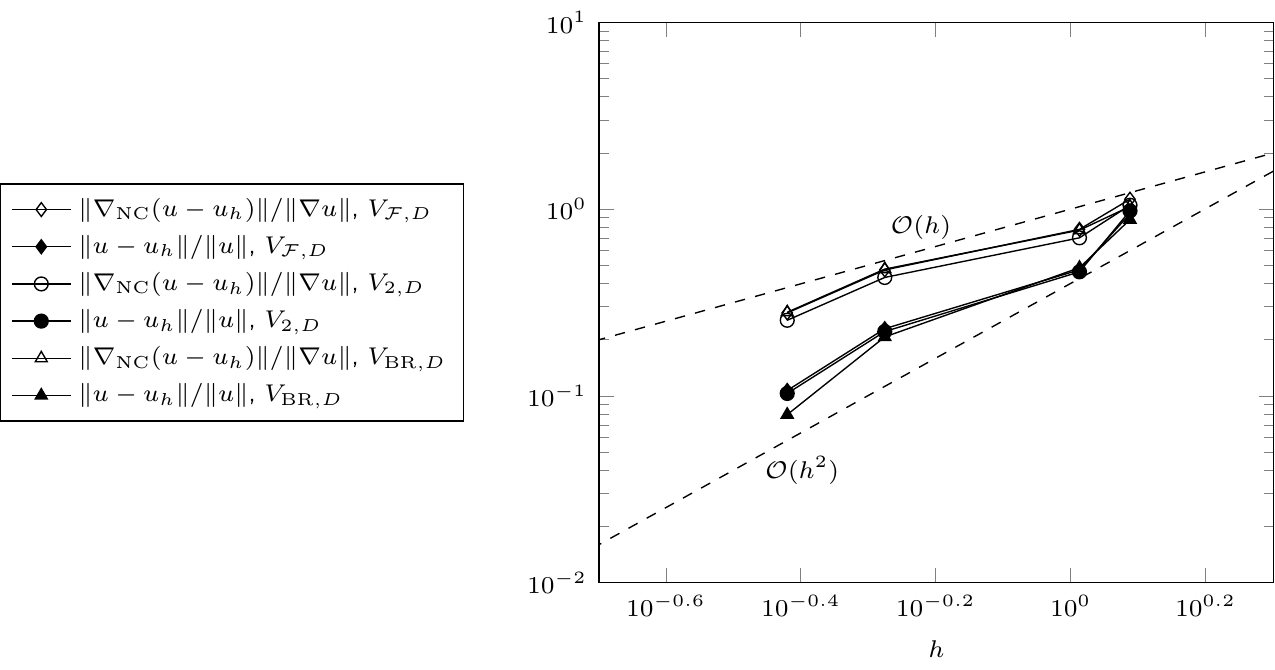}
 \end{center}
 \caption{\label{f:LG3}Convergence history plot for the example from 
Subsection~\ref{ss:numericsLG3}}
\end{figure}

This subsection considers the tensor product L-shaped domain 
\begin{align*}
  \Omega=\left((-1,1)^2\setminus([0,1]\times[-1,0]) \right)\times (-1,1)
\end{align*}
with $\Gamma_N:=\{1\}\times (0,1)\times (-1,1)$
and exact solution 
\begin{align*}
 u(x_1,x_2,x_3) 
   = \Curl\begin{pmatrix}
            (1-x_3^2)^2 u_\mathrm{Gr}(x_1,x_2)\\
            \cos(\pi x_3) u_\mathrm{Gr}(x_1,x_2)\\
            x_3 u_\mathrm{Gr}(x_1,x_2),
          \end{pmatrix}
  \qquad\text{and}\qquad 
  p=0,
\end{align*}
where $u_\mathrm{Gr}$ is the singular solution for the 2d plate problem
on the L-shaped domain from~\cite[p.~107]{Grisvard1992} and reads in 
polar coordinates
\begin{align*}
  u(r,\theta)=(r^2\cos^2\theta-1)^2 (r^2\sin^2\theta-1)^2\, r^{1+\alpha}\, g(\theta).
\end{align*}
Here, $\alpha:=0.544483736782464$ is a noncharacteristic root 
of $\sin^2(\alpha\omega)=\alpha^2\sin^2(\omega)$ for $\omega:=3\pi/2$ and 
\begin{align*}
  g(\theta) &= \left[\frac{\sin((\alpha-1)\omega)}{\alpha-1}-\frac{\sin((\alpha+1)\omega)}{\alpha+1}\right]
     (\cos((\alpha-1)\theta)-\cos((\alpha+1)\theta)\\
   &\qquad 
    - \left[\frac{\sin((\alpha-1)\theta)}{\alpha-1}-\frac{\sin((\alpha+1)\theta)}{\alpha+1}\right]
     (\cos((\alpha-1)\omega)-\cos((\alpha+1)\omega).
\end{align*}
The right-hand side data $f$ and $g$ are chosen according to the exact solution.
The initial triangulation is depicted in Figure~\ref{f:inittriangcube}.
The $H^1$ and $L^2$ errors are plotted in Figure~\ref{f:LG3} against the mesh-size. 
Although the exact solution is not in $H^2(\Omega)$, the convergence rate 
of the $H^1$ errors for all three methods seems to be $\mathcal{O}(h)$, 
at least in a pre-asymptotic regime. This is in agreement with numerical 
experiments in 2d and 3d for the plate problem, where the reduced convergence 
rate can only be seen in the regime of very fine meshes. The $L^2$ errors show convergence 
rates slightly smaller than $\mathcal{O}(h^2)$ for the three considered methods,
but the Bernardi-Raugel FEM seems to have a slightly better convergence rate
than the two nonconforming FEMs.

%\nocite{BrennerScott08, Brenner2004, CarstensenFunken2001, HuMaShi2014}

{\footnotesize
\bibliographystyle{alpha}
\bibliography{KS3D}
}

\end{document}